\numberwithin{equation}{section}
\numberwithin{equation}{subsection}
\newtheorem{lemma}[equation]{Lemma}%[subsection]
\newtheorem{lem/def}[equation]{Lemma/Definition}
\newtheorem{proposition}[equation]{Proposition}
\newtheorem{theorem}[equation]{Theorem}
\newtheorem{corollary}[equation]{Corollary}
\theoremstyle{definition}
\newtheorem{definition}[equation]{Definition}
\newtheorem{remark}[equation]{Remark}
\newtheorem{notation}[equation]{Notation}
\newcommand{\beq}{\begin{equation*}}
\newcommand{\eeq}{\end{equation*}}
\newcommand{\beqlbl}{\begin{equation}}
\newcommand{\eeqlbl}{\end{equation}}
\newcommand{\ba}{\begin{array}}
\newcommand{\ea}{\end{array}}
\newcommand{\Hom}{\mathrm{Hom}}
\newcommand{\Ext}{\mathrm{Ext}}
\newcommand{\ot}{\otimes}
\newcommand{\ox}{\otimes}
\newcommand{\xym}{\xymatrix}
\DeclareMathOperator{\coh}{H}
\DeclareMathOperator{\HH}{HH}
\title[Lie bracket on Hochschild cohomology]{An Alternate Approach to the Lie Bracket
on Hochschild Cohomology}
\date{13 March 2015}
\author{Cris Negron}
\address{Department of Mathematics\\University of Washington\\
Seattle, WA 98195, USA}
\email{negron@uw.edu}
\author{Sarah Witherspoon}
\address{Department of Mathematics\\Texas A\&M University\\College Station, TX 77843,
USA}
\email{sjw@math.tamu.edu}
\thanks{The first author was supported by the
NSF Graduate Research Fellowship under grant DGE-1256082.
The second author was partially supported by NSF grant DMS-1101399.}
\begin{document}

\maketitle
\begin{abstract}
We define Gerstenhaber's graded Lie bracket directly on complexes other than
the bar complex, under some conditions, resulting in a practical technique for
explicit computations. 
The Koszul complex of a Koszul algebra in particular 
satisfies our conditions. 
As examples we recover the Schouten-Nijenhuis bracket for a polynomial ring 
and the Gerstenhaber bracket for a group algebra of a cyclic group of prime order.
\end{abstract}

\section{Introduction}

Hochschild cohomology incorporates 
useful information about an algebra, the study of which was begun by
Hochschild~\cite{Hochschild45} and Gerstenhaber~\cite{Gerstenhaber63}.
In low degrees one finds the center of the algebra, 
derivations, infinitesimal deformations,
and obstructions in algebraic deformation theory. 
Vanishing in high degrees is equivalent to smoothness in commutative 
settings~\cite{AvramovIyengar05,AvramovVigue-Poirrier92}.
Noncommutative algebras can behave quite 
differently~\cite{BuchweitzGreenMadsenSolberg}, yet 
analogous notions have also been explored in  noncommutative settings~\cite{Krahmer07}.
Hochschild cohomology is used in support variety theory, 
a tool for studying representations of some types of finite
dimensional algebras~\cite{SnashallSolberg04}.

In spite of its many uses, some of the structure of Hochschild cohomology
remains elusive.
It is a Gerstenhaber algebra, that is, it has both a cup product and a graded
Lie bracket, and the bracket induces graded derivations with respect to the product.
Products are defined  in any number of equivalent ways:
as Yoneda composition of $n$-extensions of bimodules, as composition of maps in
arbitrary projective bimodule resolutions, or as application of a diagonal map
to the tensor product over the algebra of two copies of an arbitrary
bimodule resolution. 
This freedom of choice 
makes the product quite tractable for many algebras. 
Brackets have not been so amenable to study on resolutions
other than the bar resolution where they were historically defined, and thus they are 
more difficult to compute and to use.
Typically one computes cohomology with a resolution other than the bar resolution,
and then translates the bracket from the bar resolution using explicit comparison
maps. These maps are nearly always very cumbersome, and beg for a better approach.

The question about realizing the Gerstenhaber bracket on other resolutions was raised
by Gerstenhaber and Schack~\cite{GerstenhaberSchack88}.
An elegant such realization was given by Schwede~\cite{Schwede98}, based on
Retakh's description of categories of extensions~\cite{Retakh86}.
Hermann~\cite{Hermann14} generalized Schwede's construction of brackets as loops
in an extension category to other suitable exact monoidal categories.
Yet it seems difficult to translate these beautiful constructions into practical
techniques for explicit computations of brackets as may be required, for example,
to answer some questions in algebraic deformation theory. 
Our paper takes a different route to  computational techniques. 
%for a large class of algebras that includes Koszul algebras.

We begin with the observation that there is more than one way to define the graded
Lie bracket on the bar resolution $B$ of an algebra:
We show in Section~\ref{altC} that 
a particular class of chain maps, of graded degree 1, from the tensor
product of two copies of $B$ to $B$,
gives rise to many brackets at the chain level. 
These all induce the Gerstenhaber bracket on cohomology. 
We mimic this construction in Section~\ref{other}  
for other resolutions satisfying some hypotheses. 
We define brackets and prove that these brackets also induce 
Gerstenhaber brackets on cohomology.
One useful condition in particular is when 
the resolution embeds into the bar resolution in such a way
that the diagonal maps commute with the embedding, and
the strongest results follow from this condition (Subsection~\ref{best}).  
Koszul resolutions of Koszul algebras in particular 
satisfy this hypothesis. 

We illustrate our techniques  by recovering
the Schouten-Nijenhuis bracket on polynomial rings in Section~\ref{snbracket}. 
We also give some results under weaker conditions (Subsection~\ref{weaker})
that still may be useful but have the disadvantage of requiring 
a more detailed comparison with the bar resolution.
In Section~\ref{cyclic} 
we show that these techniques may 
be used to recover Gerstenhaber brackets for a group algebra
of a cyclic group of prime order $p$ over a field of characteristic $p$.  (Expressions for such brackets were originally given in the work of Sanchez-Flores~\cite{Sanchez-Flores}.)  
These two well known classes of examples, in Sections~\ref{snbracket} and \ref{cyclic},
serve merely to illustrate our techniques here.
A new class of examples is given in~\cite{GrimleyNguyenWitherspoon}: 
Brackets are computed there for the quantum complete intesections
$\Lambda_q := k\langle x,y\rangle/(x^2, y^2, xy+qyx)$ for various
(nonzero) values of a parameter $q$ in a field $k$.
The algebra structure of Hochschild cohomology of $\Lambda_q$ had been computed
by Buchweitz, Green, Madsen, and Solberg~\cite{BuchweitzGreenMadsenSolberg}.
Grimley, Nguyen, and the second author~\cite{GrimleyNguyenWitherspoon} used the
techniques of the current paper to compute Gerstenhaber brackets directly on 
the Koszul resolution of $\Lambda_q$.
They did not need to know explicit formulas for chain maps between the bar and
Koszul resolutions, as these were not used; it suffices to know existence of such maps
 satisfying some conditions. 
Also in~\cite{GrimleyNguyenWitherspoon} 
is a general result about the Gerstenhaber algebra structure of the
Hochschild cohomology of a twisted tensor product of algebras.
Its proof uses 
techniques from the current paper, showing that these techniques can be useful as
well for algebras that are not Koszul. 

%Many questions remain. 
%Do these techniques provide a more efficient, or simply a different,
%way to compute brackets for Koszul algebras? 
%Can the theory be improved for algebras that are not Koszul,
%that is, can we eliminate altogether the use of maps comparing to the
%bar resolution, as we effectively have for Koszul algebras? 
%Even if the answer is no, does the apparent parallel theory
%(not involving the bar complex) produce a well-defined bracket with
%useful properties?
%For example, does such a bracket 
%vanish whenever the Gerstenhaber
%bracket vanishes, at least in degree 2 where vanishing is the condition
%for a noncommutative Poisson structure?  

\section{Alternate brackets on the Hochschild complex}\label{altC}

Let $k$ be a field of arbitrary characteristic and let $A$ be a $k$-algebra. 
Let us recall the definitions of the bar
resolution $B$ of $A$ and the Hochschild cochain complex.
We write $\ot$ to mean $\ot_k$. 
\par
Let $TA=T(A)$ denote the graded tensor coalgebra, that is, 
$TA = \oplus_{r\geq 0} (TA)_r$ where $(TA)_r = A^{\ot r}$ and the
coproduct $\Delta : TA \rightarrow TA \ot TA$ is the $k$-linear map
defined by
$$
  \Delta(a_1\ot \cdots\ot a_r) = \sum_{i=0} ^r (a_1\ot\cdots \ot a_i)\ot
    (a_{i+1}\ot \cdots\ot a_r)
$$
for $a_1,\ldots, a_r\in A$. 
As a graded $A$-bimodule, we have $B=A\ox TA\ox A$, with $B_r=A^{\ot (r+2)}$
for each $r\geq 0$. 
We may use the
notation $a\ox x\ox a'$ to denote monomials in $B$, where $a,a'\in A$,
and $x\in TA$.  The differential on $B$ is
\beqlbl
a_0\ox \dots \ox a_{r+1}\mapsto \sum_{0\leq i\leq r}(-1)^ia_0\ox\dots \ox a_ia_{i+1}\ox\dots\ox a_{r+1}.
\label{dffrntl}
\eeqlbl
Note that the comultiplication on $TA$ induces a quasi-isomorphism
\beqlbl
\ba{c}
\Delta:B\to B\ox_A B \\
a_0\ox \dots \ox a_{r+1}\mapsto 
\displaystyle{\sum_i (a_0\ox \dots \ox a_i\ox 1)\ox 
(1\ox a_{i+1}\ox \dots \ox a_{r+1}) } .\ea
\label{diagmap}
\eeqlbl
The map $\Delta$ is coassociative by construction, that is,
$ ( \Delta\ot id) \Delta =  (id \ot \Delta) \Delta$ as chain
maps from $B$ to $B\ot_A B\ot_A B$. 
On monomials, we can write this map symbolically as 
\beq
a\ox x\ox a'\mapsto \sum(a\ox x_1\ox 1)\ox(1\ox x_2\ox a')
\eeq
where the sum runs over all possible ways to factor the monomial $x$.
Let $$C(A) :=\Hom_{A^e}(B,A),$$ where $A^e = A\ot A^{op}$. 
The cup product may be defined at the cochain level via 
the diagonal map $\Delta$:
If $f\in \Hom_{A^e}(B_r,A)$, $g\in \Hom_{A^e}(B_s,A)$, then
\[
   (f\smile g) (a_0\ot\dots\ot a_{r+s+1}) =
    f(a_0\ot \dots\ot a_r \ot 1) g(1\ot a_{r+1}\ot\dots\ot a_{r+s+1}).
\label{cupprod}
\]
We use the notation $|f| = r$ for the degree of $f$ in this case. 
The Gerstenhaber bracket is defined as follows, where we replace
tensor products by commas in function notation for convenience.

\begin{definition}[Standard Gerstenhaber Bracket \cite{Gerstenhaber63}]\label{defn-bracket} 
Let $\circ$ denote the operation $C(A)\ox C(A)\to C(A)$ given on homogeneous elements
$f$ and $g$ by 
\beq
f\circ g ( a_1\ox\dots \ox a_n) = \sum^{|f|}_{j=1} (-1)^{(|g|-1)(j-1)}f(a_1,\dots, 
a_{j-1}, g(a_j,\dots, a_{j+|g|-1}),\dots,a_n), 
\eeq
and define the bracket $[ \ , \ ]$ by
\beq
[f,g]= f\circ g-(-1)^{(|f|-1)(|g|-1)}g\circ f.
\eeq
\end{definition}

The cup product and bracket induce operations on Hochschild cohomology that
enjoy many useful properties, for example,
\begin{equation}\label{derivation}
   [ \bar{f}\smile\bar{g} , \bar{h} ] = 
    [ \bar{f}, \bar{h} ] \smile \bar{g} \ + \ (-1)^{ |\bar{f}| ( |\bar{h}|-1)}
      \bar{f} \smile [\bar{g}, \bar{h} ] ,
\end{equation}
where $f,g,h$ are homogeneous cocycles and $\bar{f}, \bar{g},\bar{h}$ are
their images in Hochschild cohomology. 
See \cite{Gerstenhaber63} for this and other properties.

\begin{lemma}\label{isomorphism} 
As a graded $A$-bimodule, $B\ox_A B\cong A\ox TA\ox A\ox TA\ox A$.
Under this identification, the differential  is given by
\beq
\ba{l}
(a_0\ox\dots\ox a_{j-1})\ox(a_j)\ox(a_{j+1}\ox\dots\ox a_{n+1})\\
\hspace{5mm}\mapsto 
\ba{l}
\sum_{i<j-1} (-1)^i(a_0\ox\dots\ox a_ia_{i+1}\dots)\ox (a_j)\ox(\dots \ox a_{n+1})\\
+(-1)^{j-1}(a_0\ox\dots)\ox (a_{j-1}a_j)\ox(\dots\ox a_{n+1})\\
+(-1)^{j-1}(a_0\ox\dots)\ox (a_ja_{j+1})\ox(\dots\ox a_{n+1})\\
+\sum_{k>j}(-1)^{k-1}(a_0\ox\dots)\ox (a_j)\ox(\dots\ox a_ka_{k+1}\dots\ox a_{n+1}).
\ea
\ea
\eeq
\label{1}
\end{lemma}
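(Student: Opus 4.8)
The plan is to prove both assertions by transporting the bar differential through the obvious identification. For the underlying graded $A$-bimodule isomorphism, note that $B=A\ox TA\ox A$, so by associativity of $\ox$ and the canonical isomorphism $A\ox_A A\cong A$ one gets $B\ox_A B=(A\ox TA\ox A)\ox_A(A\ox TA\ox A)\cong A\ox TA\ox A\ox TA\ox A$, compatibly with the outer left and right $A$-actions. The bookkeeping point is that, since $TA$ is concentrated in nonnegative degrees and $(TA)_0=k$, the middle copy of $A$ is a genuine tensor slot: every element of $B\ox_A B$ is uniquely of the form $a\ox x\ox b\ox y\ox c$ with $a,b,c\in A$ and $x,y\in TA$, it has homological degree $|x|+|y|$, and it is the image of $(a\ox x\ox 1)\ox_A(b\ox y\ox c)\in B_{|x|}\ox_A B_{|y|}$. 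In particular the element $(a_0\ox\dots\ox a_{j-1})\ox(a_j)\ox(a_{j+1}\ox\dots\ox a_{n+1})$ in the statement is the class of $\beta\ox_A\gamma$ with $\beta=a_0\ox\dots\ox a_{j-1}\ox 1\in B_{j-1}$ and $\gamma=a_j\ox\dots\ox a_{n+1}\in B_{n-j}$.

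Next I would use that, as a tensor product of complexes over $A$, the differential on $B\ox_A B$ is $\partial(\beta\ox_A\gamma)=\partial\beta\ox_A\gamma+(-1)^{|\beta|}\beta\ox_A\partial\gamma$ with $\partial$ the bar differential of $B$. Applying this to $\beta$ and $\gamma$ above and expanding, the terms of $\partial\beta$ of index $i\le j-2$ multiply two entries inside the first chunk, and after reassociating across $\ox_A$ and renormalizing they give the first sum of the statement, with sign $(-1)^i$. The top term of $\partial\beta$, index $i=j-1$, multiplies $a_{j-1}$ into the trailing $1$ of $\beta$, so after reassociating across the $\ox_A$ the new middle slot becomes $a_{j-1}a_j$, producing $(-1)^{j-1}(a_0\ox\dots)\ox(a_{j-1}a_j)\ox(\dots\ox a_{n+1})$. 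Symmetrically, the bottom term of $\partial\gamma$, index $0$, merges $a_ja_{j+1}$ as the new middle slot, and together with the prefactor $(-1)^{|\beta|}=(-1)^{j-1}$ this is $(-1)^{j-1}(a_0\ox\dots)\ox(a_ja_{j+1})\ox(\dots\ox a_{n+1})$. The remaining terms of $\partial\gamma$ are internal to the last chunk; writing $k=j+k'$ for the shifted index, the sign is $(-1)^{j-1}(-1)^{k-j}=(-1)^{k-1}$, which is the last sum. Gathering these four groups of terms gives exactly the asserted formula.

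I expect the only delicate point to be the two interface contributions producing $a_{j-1}a_j$ and $a_ja_{j+1}$: both carry the same sign $(-1)^{j-1}$ — the first straight from the bar differential of $\beta$, the second through the Koszul sign $(-1)^{|\beta|}$ — and one must be sure these are distinct terms rather than one term counted twice. This is clear once one records that $\partial\beta$ and $\partial\gamma$ are independent and have $j$ and $n-j+1$ summands respectively, so that $\partial$ on $B\ox_A B$ in this homological degree has $n+1$ terms, matching the count in the displayed formula. The extreme values of $j$, where one chunk degenerates, follow from the same computation with an empty sum, and independence of the chosen representative $\beta\ox_A\gamma$ is automatic from the $\ox_A$-relations together with naturality of the tensor-product differential.
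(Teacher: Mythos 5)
Your proposal is correct and follows essentially the same route as the paper: the identification comes from $A\ox_A A\cong A$ applied to $(A\ox TA\ox A)\ox_A(A\ox TA\ox A)$, and the formula is obtained by applying the Koszul sign rule $d(x\ox y)=d(x)\ox y+(-1)^{|x|}x\ox d(y)$ to the bar differentials of the two factors. The paper leaves this as "an easy check"; your sign bookkeeping (in particular the two interface terms $a_{j-1}a_j$ and $a_ja_{j+1}$, each with sign $(-1)^{j-1}$, and the term count $n+1$) is exactly the verification it has in mind.
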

\begin{proof}
The first portion of the statement is clear.  The second is an easy
check from the fact that the differential on the tensor complex
$X\ox_A Y$, of any two $A$-bimodule complexes $X$ and $Y$, 
is given by $d(x\ox y)=d(x)\ox y+(-1)^{|x|}x\ox d(y)$.
\end{proof}

We will deconstruct the bracket operation, realize it as a composition
of several maps, and make some changes in the apparent choices involved. 
We will observe that these choices 
do not matter at the level of cohomology,
giving us some freedom in the definition. 
It is this freedom that will allow us, in the next section, 
to define the bracket independently on other cochain complexes
satisfying certain conditions. 

We first define a chain map $F_B: B\ot_A B\rightarrow B$.  
By the isomorphism of Lemma~\ref{isomorphism}, 
elements in the tensor product $B\ox_A B$
may be identified with sums of elements of the form
\beq
a\ox x\ox a'\ox y\ox a'',
\eeq
with $x\in A^{\ot i}$, $y\in A^{\ot j}$ and $a,a',a''\in A$.  
%\par
We define 
$F_B: B\ox_A B\to B$ on such monomials as follows:
If $i>0$ and $j>0$, then 
\begin{equation}\label{ogFB}
\begin{array}{rcl}
F_B ( a\ox x\ox a'\ox y\ox a'') &=& 0 , \\ 
F_B ( a\ox a'\ox y\ox a'') & = & aa'\ox y\ox a'' , \\
F_B ( a\ox x\ox a'\ox a'') & = & -a\ox x\ox a'a'' . 
\end{array} 
\end{equation}
In degree 0, 
\beq
F_B(a\ox a'\ox a'')= aa'\ox a''-a\ox a'a'' . 
\eeq
As one can see from the definition, $F_B$ is $0$ on
most of the tensor complex $B\ox_AB$, and is simply given by the actions of $A$
on $B$ for the extremal terms
$B^0\ox_A B^j$ and $B^i\ox_A B^0$.
One may check directly that $F_B$ is a  chain map. 
Alternatively, this follows from Proposition~\ref{dG}
or the general construction given in Section~\ref{best}. 

In the remainder of this article, we use the isomorphism of
Lemma~\ref{isomorphism}, without comment, to identify $B\ot _A B$
with $A\ot TA\ot A\ot TA\ot A$. 

\begin{notation}\label{notation} 
\begin{enumerate}
\item Let $\Delta^{(2)}$ denote the map
\beq
(\Delta\ox id)\Delta=(id\ox \Delta)\Delta:B\to B\ox_A B\ox_A B.
\eeq
\item Let $G:B\ox_A B\to B$ denote the map given
  on monomials by
\beq
G\big((a_0\ox\dots\ox a_{j-1})\ox (a_j)\ox(a_{j+1}\ox\dots \ox a_{n+1})\big)
= (-1)^{j-1}a_0\ox\dots\ox a_{j-1}\ox a_j\ox a_{j+1}\ox\dots \ox a_{n+1} . 
\eeq
\end{enumerate}
\label{notations1}
\end{notation}

Notice that the circle operation $f\circ g$ of Definition~\ref{defn-bracket}, on
Hochschild cochains $f$ and $g$ is precisely the composition
\beq
B \stackrel{\Delta^{(2)}}{\relbar\joinrel\longrightarrow} 
B\ox_A B\ox_A B\stackrel{id_B\ox_A g\ox_A id_B}
{\relbar\joinrel\relbar\joinrel\relbar\joinrel\relbar\joinrel\relbar\joinrel\relbar\joinrel\relbar\joinrel\longrightarrow}
 B\ox_A B\stackrel{G}{\longrightarrow} B\stackrel{f}{\longrightarrow} A .
\eeq
To be clear, in the definition of the map $id_B\ox_A g\ox_A id_B$ one includes ``Koszul signs" so that on elements the map is given by
\beq
(a\ox x\ox a'\ox y\ox a''\ox z\ox a''')\mapsto (-1)^{|x||g|}(a\ox x)\ox g(a'\ox y\ox a'')\ox (z\ox a''').
\eeq
This observation inspires our alternate definition of brackets below
(Definition~\ref{phi-bracket}). 
First we record a crucial property of the map $G$.

\begin{proposition}\label{dG}
Let $d$ be the differential on the  complex $\Hom_{A^e}(B\ox_A
B,B)$.  The map $G\in \Hom_{A^e}(B\ox_A B, B)$ is a contracting
homotopy for $F_B$, that is, $d(G):=d_BG+ Gd_{B\ox_A B}=F_B$.
\end{proposition}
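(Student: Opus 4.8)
The plan is to verify the homotopy identity $d_B G + G\,d_{B\ox_A B} = F_B$ by a direct monomial-by-monomial computation, organized according to the three cases appearing in the definition~\eqref{ogFB} of $F_B$ (plus the degree-$0$ case). First I would fix notation: a general monomial of $B\ox_A B$ is written $(a_0\ox\dots\ox a_{j-1})\ox(a_j)\ox(a_{j+1}\ox\dots\ox a_{n+1})$, which sits in $B^{j-1}\ox_A B^{n+1-j}$, and I would recall from Lemma~\ref{isomorphism} the explicit form of $d_{B\ox_A B}$ on such a monomial, as well as the sign-twisted formula for $G$ from Notation~\ref{notations1}. Note that $G$ only ``sees'' monomials whose middle tensor factor has length exactly $1$; this is exactly the shape of the output of $d_{B\ox_A B}$ restricted to the two extremal columns $B^0\ox_A B^{\bullet}$ and $B^{\bullet}\ox_A B^0$, where the middle factor has length $1$ or $0$.

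The core of the argument is then a bookkeeping exercise. For a monomial $a\ox a'\ox y\ox a''$ in $B^0\ox_A B^j$ with $j>0$, applying $d_{B\ox_A B}$ produces the term $(aa')\ox y\ox a''$ (middle factor collapses to length $1$, with sign $(-1)^{0}=1$) together with terms where the right-hand factor's internal differential acts; applying $G$ to the first term gives $aa'\ox y\ox a''$ with sign $(-1)^{1-1}=1$, while $d_B G$ of the original (which is $0$, since $G$ kills length-$0$ middle factors... but here the monomial itself has middle factor of length $0$, so I must instead compute $G d_{B\ox_A B}$ first and then add $d_B G$ applied to that). I would carefully trace that the ``internal differential'' contributions of $d_{B\ox_A B}$, after applying $G$, precisely cancel against $d_B G d_{B\ox_A B}$-type cross terms, leaving exactly $aa'\ox y\ox a'' = F_B(a\ox a'\ox y\ox a'')$. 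The case $B^i\ox_A B^0$ is symmetric and produces the sign $-a\ox x\ox a'a''$, matching~\eqref{ogFB}; the sign $(-1)^{j-1}$ built into $G$, combined with the sign $(-1)^{j-1}$ on the two ``collapse'' terms of $d_{B\ox_A B}$ in Lemma~\ref{isomorphism}, is what produces the minus sign. For a monomial with $i>0$ and $j>0$, I must check that everything cancels so the result is $0$; here both $d_B G$ and $G d_{B\ox_A B}$ act nontrivially and the cancellation is the real content. Finally the degree-$0$ case $a\ox a'\ox a''$ is checked by hand against $F_B(a\ox a'\ox a'')=aa'\ox a''-a\ox a'a''$.

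The main obstacle I expect is purely the sign accounting. There are several independent sources of signs — the $(-1)^i$, $(-1)^{j-1}$, $(-1)^{k-1}$ terms in the differential of Lemma~\ref{isomorphism}, the $(-1)^{j-1}$ twist in the definition of $G$, the Koszul sign $(-1)^{|x|}$ in the differential $d(x\ox y)=d(x)\ox y+(-1)^{|x|}x\ox d(y)$ on a tensor complex, and the sign picked up when $d_B$ acts after $G$ has shifted degree — and making them line up so that (a) all but one or two terms cancel in the extremal columns and (b) everything cancels in the interior requires care. An alternative, cleaner route, which I would mention, is to avoid the computation entirely: by the construction in Section~\ref{best}, both $F_B$ and $G$ arise from a canonical contraction of the bar resolution against itself, for which the homotopy identity holds by general nonsense; but for a self-contained proof here the direct check is the honest approach, and the interior-cancellation step ($i,j>0$) is where I would be most careful.
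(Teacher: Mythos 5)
Your overall strategy --- verify $d_BG+Gd_{B\ox_AB}=F_B$ by a direct monomial computation, split into the interior case, the two extremal cases, and degree $0$ --- is exactly the paper's, but the sketch rests on a misreading of $G$ that would derail the computation. You assert that ``$G$ only sees monomials whose middle tensor factor has length exactly $1$'' and that ``$G$ kills length-$0$ middle factors.'' Neither is true: under the identification $B\ox_AB\cong A\ox TA\ox A\ox TA\ox A$ every monomial has the form $(a_0\ox\dots\ox a_{j-1})\ox(a_j)\ox(a_{j+1}\ox\dots\ox a_{n+1})$ with a single $A$-factor in the middle, and $G$ is nonzero on \emph{all} of them --- it simply concatenates the three pieces with the sign $(-1)^{j-1}$. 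The map that vanishes on most of $B\ox_AB$ is $F_B$, not $G$; you appear to have conflated the two. Likewise, the identity $d(G)=d_BG+Gd_{B\ox_AB}$ has exactly two summands, so there are no ``$d_BGd_{B\ox_AB}$-type cross terms'' for anything to cancel against, and the proposed cancellation of ``internal differential contributions'' against such cross terms is not a mechanism that exists here.

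The actual mechanism, which your sketch misses, is this: on $B_{>0}\ox_AB_{>0}$ each of the two composites $Gd_{B\ox_AB}$ and $d_BG$, applied to a monomial, produces the \emph{same} complete list of ``multiply two adjacent entries'' terms in $B$, but with exponents of $-1$ that differ by exactly $1$ in every slot (e.g.\ $(-1)^{i+j-2}$ versus $(-1)^{j-1+i}$), so $Gd_{B\ox_AB}=-d_BG$ there and $d(G)=0$ on the interior. On $B_0\ox_AB_{>0}$ and $B_{>0}\ox_AB_0$ one or two of these terms are absent from one composite (the bar differential out of a degree-$0$ factor contributes nothing), and the surviving uncancelled terms are precisely $aa'\ox y\ox a''$, respectively $-a\ox x\ox a'a''$, i.e.\ $F_B$. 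I would also not lean on your proposed ``general nonsense'' alternative: Lemma~\ref{GKpi} and Section~\ref{best} take $d(G)=F_B$ as an input rather than deriving it, and Lemma~\ref{h-phi} only produces \emph{some} contracting homotopy for $F_K$; identifying that output with $G$ would itself require a computation comparable to the direct check.
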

\begin{proof}
Take a monomial 
\beq
(a_0\ox\dots\ox a_{j-1})\ox(a_j)\ox(a_{j+1}\ox\dots\ox a_{n+1})\in B_{j-1}\ox_A B_{n-j}
\eeq
with $j-1, n-j>0$. Applying the formulas given in Lemma \ref{1} and 
Notation~\ref{notation}(2), the function $G d_{B\ox_A
  B}$ sends $(a_0\ox\dots\ox
a_{j-1})\ox(a_j)\ox(a_{j+1}\ox\dots\ox a_{n+1})$ to the element
\beqlbl
\ba{l}
\sum_{i<j-1} (-1)^{(i+j-2)}(a_0\ox\dots \ox a_ia_{i+1}\ox \dots \ox a_{n+1})\\
+(-1)^{(j-1+j-2)}(a_0\ox\dots\ox a_{j-1}a_j \ox\dots\ox a_{n+1})\\
+(-1)^{(j-1+j-1)}(a_0\ox\dots\ox a_ja_{j+1}\ox\dots\ox a_{n+1})\\
+\sum_{j<k}(-1)^{(k-1+j-1)}(a_0\ox\dots\ox a_ka_{k+1}\ox\dots\ox a_{n+1})
\ea
\label{form1}
\eeqlbl
in $B$.  Now $d_B G$ will send that same element in $B\ox_A B$ to
\beqlbl
\ba{l}
\sum_{i<j-1} (-1)^{(j-1+i)}(a_0\ox\dots\ox a_ia_{i+1}\ox\dots \ox a_{n+1})\\
+(-1)^{(j-1+j-1)}(a_0\ox\dots\ox a_{j-1}a_j \ox\dots\ox a_{n+1})\\
+(-1)^{(j-1+j)}(a_0\ox\dots\ox a_ja_{j+1}\ox\dots\ox a_{n+1})\\
+\sum_{j<k}(-1)^{(j-1+k)}(a_0\ox\dots\ox a_ka_{k+1}\ox\dots\ox a_{n+1}).
\ea
\label{form2}
\eeqlbl
Comparing the exponents of $-1$, we see that $Gd_{B\ox_A B}=-d_BG$
so that $d(G)=0$ on $B_{>0}\ox_A B_{>0}$.
\par
Now consider an element $(a_0)\ox (a_1)\ox (a_2\ox\dots \ox a_{n+1})\in
B_0\ox_A B_{n-1}$, with $n-1>0$.  
%Since $(d_{B})_0 = 0$, 
Applying $G d_{B\ox_A B}$ to this
element yields (\ref{form1}) where $j=1$, minus the first two summands, and
applying $d_B G$ yields (\ref{form2}) where $j=1$, minus the first summand.  So
applying $d(G)$ to this element yields
\beq
a_0a_1\ox\dots\ox a_{n+1}\in B .
\eeq
Similarly, for the elements $(a_0\ox\dots \ox a_{n-1})\ox (a_{n})\ox
(a_{n+1})$, applying $d(G)$ yields
\beq
(-1)^{n-1+n}a_0\ox\dots\ox a_{n}a_{n+1}=-a_0\ox\dots\ox a_{n}a_{n+1} .
\eeq
In degree $0$ we have
\beq
d(G)( (a_0)\ox (a_1)\ox (a_2) ) =  a_0a_1\ox a_2-a_0\ox a_1a_2 .
\eeq
Comparing these values in the different
cases to our definition of $F_B$ above, we see that $d(G)=F_B$.
\end{proof}

\subsection{Alternate definition of bracket on the Hochschild complex}\label{altC1}

We call a map $\phi:B\ox_A B\to B$ for which $d(\phi):=d_B \phi+ \phi d_{B\ox_A B}
=F_B$ a contracting homotopy for $F_B$.

\begin{definition}[$\phi$-circle operation, $\phi$-bracket]\label{phi-bracket}
Let $\phi\in \Hom_{A^e}(B\ox_A B,B)$ be any contracting homotopy for $F_B$.  The $\phi$-circle operation
$f\circ_\phi g$ on Hochschild cochains is defined as the composite
\beq
f\circ_\phi g:=f\phi (id_B\ox_A g\ox_A id_B)\Delta^{(2)}.
\eeq
The $\phi$-bracket is then defined as the graded commutator
\beq
[f,g]_\phi:=f\circ_\phi g-(-1)^{(|f|-1)(|g|-1)}g\circ_\phi f.
\eeq
\end{definition}
Note that the $G$-circle operation $\circ_G$ is the standard circle
operation and the $G$-bracket $[\ , \ ]_G$ is the standard Gerstenhaber bracket.

\begin{lemma}\label{phi-G} 
Let $\phi$ be  any contracting homotopy for $F_B$. Then the difference
$(\phi-G):B\ox_AB\to B$ is a boundary in the Hom complex.
\end{lemma}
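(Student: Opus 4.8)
The plan is to show that $\phi - G$ is a cycle in the Hom complex $\Hom_{A^e}(B \ot_A B, B)$ and then that it is in fact a boundary, using that $B \ot_A B$ is a projective resolution of $A$ and exploiting the vanishing of the relevant cohomology.

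First I would observe that since both $\phi$ and $G$ satisfy $d(\phi) = d(G) = F_B$ by hypothesis (Proposition~\ref{dG} gives $d(G) = F_B$, and $\phi$ is a contracting homotopy for $F_B$ by assumption), their difference satisfies $d(\phi - G) = 0$; that is, $\phi - G$ is a cycle in the Hom complex of degree $1$. So it suffices to show that $H^{-1}$ (or, with the degree-shift conventions in force, the group measuring degree-$1$ chain maps up to homotopy) of $\Hom_{A^e}(B \ot_A B, B)$ vanishes in the relevant spot. The key point is that $B \ot_A B$, via the quasi-isomorphism $\Delta$ of~\eqref{diagmap} composed with the multiplication $B \to A$, is a projective resolution of $A$ over $A^e$ (each $B_i \ot_A B_j = A^{\ot(i+j+2)}$ is a projective, indeed free, $A^e$-module, and the total complex is exact except in degree $0$ where its homology is $A$). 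Similarly $B$ is a projective resolution of $A$. Hence $\Hom_{A^e}(B \ot_A B, B)$ computes $\Ext$-groups, and a homogeneous cycle of degree $1$ — i.e.\ a chain map $B \ot_A B \to B[1]$ — represents an element of $\Ext^{-1}_{A^e}(A, A) = 0$. Therefore $\phi - G$ is a boundary.

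To make this concrete without invoking derived categories, I would argue directly by a standard comparison-map/homotopy argument: lift the zero map $A \to A[1]$ through the projective resolution $B \ot_A B$ of $A$ and the (exact in positive degrees, with $H_0 = A$) complex $B[1]$. Since $\phi - G$ is a chain map $B \ot_A B \to B$ of degree $1$ lying over the zero map on $H_0$ (there is nothing in $H_0(B[1])$, or rather the induced map on $H_0$ is forced to vanish for degree reasons), the usual inductive construction of a contracting homotopy applies: one builds $s : B \ot_A B \to B$ of degree $2$ with $\phi - G = d_B s + s\, d_{B \ot_A B}$, solving the lifting equation degree by degree using projectivity of each $(B \ot_A B)_n$ and exactness of $B$ in the appropriate range. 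Because the target complex $B$ is exact in all positive degrees and the source is projective in each degree, no obstruction arises at any stage.

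The main obstacle is bookkeeping with the grading/degree-shift conventions: one must be careful that ``degree $1$ chain map'' really does land in a vanishing $\Ext$ group, and that the inductive homotopy construction starts correctly at the bottom (degree $0$ of $B \ot_A B$, where one needs the image of $\phi - G$ to be a boundary in $B_1$, which holds because $H_0(B) = A$ and the relevant composite with the augmentation vanishes). Once the base case is pinned down, the inductive step is the routine projective-lifting argument and carries through without difficulty.
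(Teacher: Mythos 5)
Your proposal is correct and follows essentially the same route as the paper: note that $d(\phi)=d(G)=F_B$ makes $\phi-G$ a cycle, then use that $B\ot_A B$ is a projective bimodule resolution of $A$ so that the Hom complex computes $\Ext_{A^e}(A,A)$, which vanishes in negative degrees. The paper phrases the last step via the quasi-isomorphism $\mathrm{proj}_\ast\colon \Hom_{A^e}(B\ot_A B,B)\to\Hom_{A^e}(B\ot_A B,A)$, while your explicit inductive lifting of a homotopy is just the standard proof of that same vanishing, so the two arguments coincide in substance.
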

\begin{proof}
The difference is a  cycle, since $d(\phi)=d(G)=F_B$.
Recall that the following map is a quasi-isomorphism, where proj$_{\ast}$ 
is induced by the 
canonical projection of $B$ onto $A$ (considered as a complex in degree 0, with
0 in all other degrees): 
\beq
\Hom_{A^e}(B\ox_A B, B)\stackrel{\mathrm{proj}_\ast}
{\relbar\joinrel\relbar\joinrel\longrightarrow}
\Hom_{A^e}(B\ox_A B, A) .
\eeq
Note that $B\ox_A B$ is also a bimodule resolution of $A$, by the K\"{u}nneth formula, and so the homology of the right hand side is $\Ext_{A^e}(A,A)$.  Since $\Ext_{A^e}(A, A)$ is
$0$ in negative degrees, we
have $\mathrm{H}_{-1}(\Hom_{A^e}(B\ox_A B, B))=0$. So any  cycle in degree $-1$ 
is a boundary.  Consequently, the difference $\phi-G$ is a boundary.
\end{proof}

\begin{proposition}\label{phibrak}
Let $f$ and $g$ be cocycles in the Hochschild cochain complex
$C(A)=\Hom_{A^e}(B,A)$.  Let $\phi$ be a contracting homotopy
for $F_B$.  Then the difference
\beq
f\circ_\phi g-f\circ g
\eeq
is a boundary,  as is the difference
\beq
[f,g]_\phi-[f,g] . 
\eeq
\end{proposition}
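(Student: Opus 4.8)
The plan is to combine the composite description of the $\phi$-circle operation from Definition~\ref{phi-bracket} with Lemma~\ref{phi-G}. Since
\[
[f,g]_\phi-[f,g]=\bigl(f\circ_\phi g-f\circ g\bigr)-(-1)^{(|f|-1)(|g|-1)}\bigl(g\circ_\phi f-g\circ f\bigr)
\]
and the coboundaries in $C(A)$ form a subspace, it is enough to show that $f\circ_\phi g-f\circ g$ is a coboundary; applying this with the roles of $f$ and $g$ interchanged then yields the statement for the bracket. Write $\Phi_g:=(id_B\ox_A g\ox_A id_B)\Delta^{(2)}\colon B\to B\ox_A B$, so that, by Definition~\ref{phi-bracket} and the remark that $\circ_G$ is the standard circle operation, $f\circ_\phi g=f\,\phi\,\Phi_g$ and $f\circ g=f\,G\,\Phi_g$. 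Hence
\[
f\circ_\phi g-f\circ g=f\,(\phi-G)\,\Phi_g .
\]

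Two observations finish the argument. First, since $g$ is a cocycle it defines a chain map $B\to A$ (with $A$ in degree $0$), so $id_B\ox_A g\ox_A id_B$ is a chain map $B\ox_A B\ox_A B\to B\ox_A B$; composing with the chain map $\Delta^{(2)}$ shows $\Phi_g$ is a chain map, i.e. $d_{B\ox_A B}\,\Phi_g=\pm\,\Phi_g\,d_B$, where the sign records the graded degree $-|g|$ of $\Phi_g$. Second, by Lemma~\ref{phi-G} the difference $\phi-G$ is a coboundary in $\Hom_{A^e}(B\ox_A B,B)$; write $\phi-G=d_B\psi\pm\psi\,d_{B\ox_A B}$ for a suitable $A^e$-module homomorphism $\psi\colon B\ox_A B\to B$. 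Substituting, and using that $f$ is a cocycle so that $f\,d_B=0$, the summand $f\,d_B\,\psi\,\Phi_g$ drops out and
\[
f\circ_\phi g-f\circ g=\pm\,f\,\psi\,d_{B\ox_A B}\,\Phi_g=\pm\,f\,\psi\,\Phi_g\,d_B=\pm\,d\bigl(f\,\psi\,\Phi_g\bigr),
\]
where $d$ is now the differential of $C(A)$ and $f\,\psi\,\Phi_g\in\Hom_{A^e}(B,A)$ sits in the appropriate degree. Therefore $f\circ_\phi g-f\circ g$ is a coboundary, and the claim for $[f,g]_\phi-[f,g]$ follows as above.

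The one place that needs care is signs: one must fix compatible conventions for the differentials on the several $\Hom$-complexes, for chain maps of nonzero graded degree, and for the Koszul signs already present in $id_B\ox_A g\ox_A id_B$, and then verify that $\Phi_g$ is genuinely a chain map and that the cancellation $f\,d_B=0$ lands exactly as written. None of these signs affects the conclusion, since a sign times a coboundary is again a coboundary; the only essential ingredient is the homological vanishing behind Lemma~\ref{phi-G}, and the rest is a formal manipulation of composites.
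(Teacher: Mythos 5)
Your argument is correct and is essentially the paper's own proof: both reduce to showing $f\circ_\phi g - f\circ g = f(\phi-G)\bigl((id_B\ox_A g\ox_A id_B)\Delta^{(2)}\bigr)$ is a boundary by invoking Lemma~\ref{phi-G} to write $\phi-G=d(\psi)$ and then using that $f$ and $id_B\ox_A g\ox_A id_B$ are cocycles. The paper phrases the last step as a single application of the graded Leibniz rule, $(-1)^{|f|}d(f\psi\tilde g\Delta^{(2)})=f\,d(\psi)\,\tilde g\,\Delta^{(2)}$, where you instead unwind the composites with explicit differentials; this is a purely presentational difference.
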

\begin{proof}
Take $\tilde{g}$ to be the function $id_B\ox_A g\ox_A id_B$ in $\Hom_{A^e}(B\ot_A B\ot_A B , B\ot_A B)$.
By this notation $\tilde{g}$, in degree $n$, we mean the sum of all  maps on 
$(B\ox_A B\ox_A B)_n$ of the form
$id_i\ot_A g\ot_A id_j$, where $id_i$ is the identity map on $B_i$ and $i+j=n-|g|$. 
Note that the map $\tilde{g}$ is still a cocycle
since $g$ is a cocycle.  Then
\beq
f\circ_\phi g=f\phi \tilde{g}\Delta^{(2)}  , 
\mbox{ while }
f\circ g=f G\tilde{g}\Delta^{(2)}.
\eeq
The difference is given by
\beq
\ba{rl}
f\circ_\phi g-f\circ g&=f\phi \tilde{g}\Delta^{(2)}-fG\tilde{g}\Delta^{(2)}\\
&=f(\phi-G) \tilde{g}\Delta^{(2)}.
\ea
\eeq
By Lemma~\ref{phi-G}, there exists some  map $\psi$
with $d(\psi)=\phi-G$.  Then, since $f$ and $\tilde{g}$ are cocycles,
\beq
\ba{rl}
(-1)^{|f|}d(f\psi \tilde{g}\Delta^{(2)})&=fd(\psi) \tilde{g}\Delta^{(2)}\\
&=f(\phi-G) \tilde{g}\Delta^{(2)}\\
&=f\circ_\phi g-f\circ g , 
\ea
\eeq
whence $f\circ_\phi g-f\circ g$ is seen to be a boundary, as
claimed.  The second statement follows from the first.
\end{proof}

\begin{corollary}\label{corC1}
\begin{enumerate}
\item For any two cocycles $f$ and $g$, the $\phi$-bracket
  $[f,g]_\phi$ is yet another cocycle.  
\item If $f$ or $g$ is a boundary, then so is $[f,g]_\phi$.
\item On cocycles, the $\phi$-bracket is graded anti-commutative up to a
  boundary and also satisfies the Jacobi identity up to a boundary.
\end{enumerate}
\end{corollary}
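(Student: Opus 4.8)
The plan is to deduce all three statements from Proposition~\ref{phibrak} together with the classical properties of the standard Gerstenhaber bracket established in~\cite{Gerstenhaber63}. By Proposition~\ref{phibrak}, for any cocycles $f$ and $g$ in $C(A)$ the cochains $[f,g]_\phi$ and $[f,g]$ differ by a coboundary; fix a cochain $\beta_{f,g}$ with $[f,g]_\phi=[f,g]+d(\beta_{f,g})$. For part (1), $[f,g]$ is a cocycle by~\cite{Gerstenhaber63} and $d(\beta_{f,g})$ is trivially a cocycle, so $[f,g]_\phi$ is a cocycle. For part (2), if $f$ or $g$ is a coboundary then $[f,g]$ is a coboundary --- this is exactly the fact that makes the standard bracket descend to Hochschild cohomology, see~\cite{Gerstenhaber63} --- and then $[f,g]_\phi=[f,g]+d(\beta_{f,g})$ is a sum of two coboundaries.

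For part (3), part (1) ensures that iterated $\phi$-brackets of cocycles are again cocycles, so the relevant expressions are defined. Graded anticommutativity up to a coboundary is immediate: writing $\epsilon=(-1)^{(|f|-1)(|g|-1)}$, one has $[f,g]_\phi+\epsilon\,[g,f]_\phi=\big([f,g]+\epsilon\,[g,f]\big)+d(\beta_{f,g})+\epsilon\, d(\beta_{g,f})$, and $[f,g]+\epsilon\,[g,f]=0$ already at the cochain level by the definition of the standard bracket. For the graded Jacobi identity the key point is that, for cocycles $f,g,h$, one has $[[f,g]_\phi,h]_\phi=[[f,g],h]$ modulo $d\,C(A)$: applying Proposition~\ref{phibrak} to the cocycles $[f,g]_\phi$ and $h$ replaces $\circ_\phi$ by $\circ$ in the outer bracket up to a coboundary, and then, since $[f,g]_\phi=[f,g]+d(\beta_{f,g})$ and the standard bracket of a cocycle with a coboundary is a coboundary (again~\cite{Gerstenhaber63}), the outer bracket may be rewritten as $[[f,g],h]$ up to a coboundary. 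Summing this over the three cyclic permutations, the $\phi$-Jacobi expression is congruent modulo $d\,C(A)$ to the standard Jacobi expression, which vanishes at the cochain level by~\cite{Gerstenhaber63}; hence the $\phi$-Jacobi expression is a coboundary.

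I do not expect a serious obstacle here, since all the content has been packaged into Proposition~\ref{phibrak}; the remaining work is bilinearity bookkeeping. The only points that require some care are that part (1) must be in hand before part (3), so that the outer brackets in the Jacobi expression are applied to honest cocycles, and that comparing \emph{iterated} $\phi$-brackets with iterated standard brackets uses two inputs at once --- Proposition~\ref{phibrak} to handle the outermost $\circ_\phi$, and the classical triviality of the standard bracket modulo coboundaries when one argument is a coboundary, to handle the inner substitution.
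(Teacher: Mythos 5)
Your proposal is correct and follows exactly the paper's (very terse) argument: everything is reduced via Proposition~\ref{phibrak} to the corresponding classical properties of the standard Gerstenhaber bracket. Your extra care on the Jacobi identity --- using Proposition~\ref{phibrak} for the outer $\circ_\phi$ and bilinearity plus the classical fact that $[d\beta,h]$ is a coboundary for the inner substitution --- is precisely the bookkeeping the paper leaves implicit.
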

\begin{proof}
All of these statements follow from the previous proposition and the
fact that these conditions are satisfied by the Gerstenhaber bracket.
\end{proof}

\begin{corollary}\label{corC2}
For any contracting homotopy $\phi$ for $F_B$, the $\phi$-bracket
$[ \ , \ ]_\phi$ induces a graded Lie bracket on the shifted cohomology
\beq
[ \ , \ ]_\phi:\mathrm{HH}(A)[1]\ox \mathrm{HH}(A)[1]\to \mathrm{HH}(A)[1].
\eeq
This bracket agrees with the standard Gerstenhaber bracket on cohomology.
\end{corollary}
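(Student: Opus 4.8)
The plan is to deduce the statement directly from Corollary~\ref{corC1} and Proposition~\ref{phibrak}, which already contain all the substantive content at the chain level. First I would observe that by Corollary~\ref{corC1}(1) the operation $[\ ,\ ]_\phi$ sends a pair of cocycles to a cocycle, so it restricts to a bilinear operation on the space $Z$ of Hochschild cocycles. To see that this descends to $\mathrm{HH}(A)$, I would invoke Corollary~\ref{corC1}(2): if $f$ or $g$ is a coboundary then $[f,g]_\phi$ is a coboundary, so replacing $f$ by $f + d(\psi_1)$ and $g$ by $g + d(\psi_2)$ changes $[f,g]_\phi$ only by a coboundary (expanding by bilinearity, each of the extra terms $[d(\psi_1),g]_\phi$, $[f,d(\psi_2)]_\phi$, $[d(\psi_1),d(\psi_2)]_\phi$ is a coboundary). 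Hence $[\ ,\ ]_\phi$ is a well-defined bilinear map on cohomology. For the grading: the $\phi$-circle operation $f\circ_\phi g = f\phi(id_B\ot_A g\ot_A id_B)\Delta^{(2)}$ raises homological degree (equivalently lowers cohomological degree) by exactly one, just as the standard $G$-circle operation does, since $\phi$ has the same degree as $G$; so $|[f,g]_\phi| = |f| + |g| - 1$, which is precisely the statement that $[\ ,\ ]_\phi$ is a degree-zero operation on the shifted complex $\mathrm{HH}(A)[1]$, i.e. it has the correct degree for a graded Lie bracket there.

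Next I would verify the Lie axioms on cohomology. By Corollary~\ref{corC1}(3), graded anti-commutativity, $[\bar f,\bar g]_\phi = -(-1)^{(|\bar f|-1)(|\bar g|-1)}[\bar g,\bar f]_\phi$, and the graded Jacobi identity both hold for $[\ ,\ ]_\phi$ on cocycles only up to a coboundary. Passing to cohomology classes kills the coboundary error terms, so the induced operation on $\mathrm{HH}(A)[1]$ is genuinely graded anti-commutative and genuinely satisfies the graded Jacobi identity; together with bilinearity this is exactly the assertion that $[\ ,\ ]_\phi$ is a graded Lie bracket on $\mathrm{HH}(A)[1]$.

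Finally, for the comparison with the Gerstenhaber bracket, I would apply Proposition~\ref{phibrak}: for cocycles $f$ and $g$ the difference $[f,g]_\phi - [f,g]$ is a coboundary. Therefore the two induced operations on $\mathrm{HH}(A)$ coincide, so $[\ ,\ ]_\phi$ agrees with the standard Gerstenhaber bracket on cohomology, as claimed.

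I do not expect any real obstacle here; the corollary is essentially a repackaging of Lemma~\ref{phi-G} and Proposition~\ref{phibrak}. The only points that require a little care are purely bookkeeping: checking well-definedness on cohomology classes via bilinearity as above, and tracking the degree-shift convention so that a chain-level operation of cohomological degree $-1$ becomes a degree-$0$ bracket on $\mathrm{HH}(A)[1]$. No new estimates or homotopies are needed beyond those already established.
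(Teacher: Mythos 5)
Your proposal is correct and follows exactly the route the paper intends: Corollary~\ref{corC2} is left without an explicit proof precisely because it is the immediate consequence of Corollary~\ref{corC1} and Proposition~\ref{phibrak} that you spell out. The bookkeeping you supply (well-definedness via bilinearity, the degree shift) is the only content, and it is handled correctly.
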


\section{Brackets on other cochain complexes}\label{other} 

In this section we define brackets at the cochain level on complexes
other than the Hochschild complex. 
We show that under some conditions, these brackets induce 
precisely the Gerstenhaber bracket on cohomology.
Koszul algebras over $k$ will satisfy these conditions. 
\par
Let $K\to A$ be a projective 
$A$-bimodule resolution of $A$. 
For most of this section we will want $K$ to satisfy some hypotheses which we outline 
next.

\subsection{Hypotheses on the bimodule resolution $K\to A$}
\label{hypotheses}

We assume that the $A$-bimodule resolution $K\to A$ satisfies the following conditions:
\begin{enumerate}
\item[(a)] $K$ admits an embedding $\iota:K\to B$ of complexes of $A$-bimodules for which the following diagram commutes $$\xymatrixrowsep{3mm}\xym{	& B\ar[dr] & \\ K\ar[ur]^\iota \ar[rr]& & A.}$$
\item[(b)] The embedding $\iota$ admits a section $\pi:B\to K$, i.e.\ an $A^e$-chain map $\pi$ with $\pi \iota=id_K$.
\item[(c)] The diagonal map $\Delta_B:B\to B\ox_A B$ preserves $K$, and hence induces a diagonal quasi-isomorphism $\Delta_K:K\to K\ox_A K$.  Equivalently, $K$ comes equipped with a diagonal quasi-isomorphism $\Delta_K:K\to K\ox_A K$ satisfying $\Delta_B \iota=(\iota\ox_A \iota)\Delta_K$.
\end{enumerate}

Practically speaking, the easiest way for condition (b) to be satisfied is for $K$ to be free on some graded base space $W\subset K$ with $W$ mapping to $TA\subset B$ under $\iota$.  Indeed, one can verify that condition (b) holds if and only if the cokernel of each map $\iota_l:K_l\to B_l$ is projective over $A^e$.  So we could, alternately, require that $K$ satisfy the slightly stronger condition

\begin{enumerate}
\item[(b$'$)] $K$ is free on a graded base $W\subset K$ with $\iota(W)\subset TA$ in $B$.
\end{enumerate}

Conditions (a) and (b) can be seen as relatively mild restrictions.  In contrast, condition (c) holds a great deal of significance.  Indeed, it can be shown that if the minimal free bimodule resolution of a connected graded algebra can be made to satisfy (c), then the algebra is Koszul.  This does not mean, however, that non-Koszul algebras have no resolutions satisfying the above conditions, or that the minimal resolution can not be used in some way to compute the Lie bracket.  We will see in Section \ref{cyclic} that we can still use the minimal resolution for (the group algebras of) the cyclic $p$-group in characteristic $p$ to compute the Lie bracket on Hochschild cohomology.
\par
As the above discussion suggests, the Koszul complex of a Koszul algebra does satisfy our conditions (a)--(c).
See, e.g., \cite{BS} or \cite{N} for a discussion of diagonal maps in the case of a Koszul algebra.  
Verification of the other conditions is more straightforward.
We will not need the definition of a Koszul algebra however, as we work in the
general setting of a complex satisfying conditions (a)--(c).
In the next section we give explicitly the example of a polynomial ring, which is
a Koszul algebra.
One can also show that the Koszul resolution of a PBW deformation of a Koszul algebra fits into our framework 
(\cite[Lemma 4.1]{F}, 
\cite[Lemma 6.2]{N}, \cite{Pr}).  
In this case, the diagonal map on $K$ will be induced by the natural comultiplication on the base $W\subset K$ (denoted $\Lambda^\ast$ in \cite{N}).  Localizations of such algebras will also fit into our scheme.  
%In general, one can think of a resolution  $K$ satisfying (a) and (a) as some type of Koszul resolution.

\begin{remark}
One actually has to replace $B$ with the reduced bar resolution to get (a)--(c) to hold in the case of a non-augmented PBW deformation of a Koszul algebra.  This is, however, a straightforward process.
\end{remark}

\subsection{$\phi$-brackets on $\Hom_{A^e}(K,A)$}\label{best}

For this  subsection, let us fix a resolution $K\to A$ satisfying the hypotheses \ref{hypotheses}(a)--(c).  Let $\mu$ denote the given quasi-isomorphism $\mu:K\to A$.  Then we have the two chain maps $\mu\ox_A id_K:K\ox_A K\to A\ox_A K\cong K$ and $id_K\ox_A \mu:K\ox_A K\to K\ox_A A\cong A$.  We define the chain map $F_K$ as the difference of these two maps, $$F_K:=(\mu\ox_A id_K-id_K\ox_A \mu):K\ox_A K\to K.$$
(The natural isomorphisms $A\ox_A K\cong K$ and $K\ox_A A\cong K$ implicit in the above definition have been omitted from the notation.)   In the case that $K$ satisfies (b$'$), so that $K=A\ox W\ox A$ and the elements in $K$ are given by sums of monomials $a\ox x\ox a'$, we get the elementwise definition of $F_K$ analogous to the one given in (\ref{ogFB}).  In particular, when $K=B$ the two definitions of $F_B$ agree.

\begin{lemma}
The map $F_K:K\ox_AK\to K$ is a boundary in $\Hom_{A^e}(K\ot _A K, K)$.
\label{leamers}
\end{lemma}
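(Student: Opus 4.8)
The plan is to transfer the contracting homotopy $G$ for $F_B$ from Proposition~\ref{dG} along the embedding $\iota:K\to B$ and its section $\pi:B\to K$. The first step is the elementary observation that, exactly as $F_B=\mu_B\ox_A\id_B-\id_B\ox_A\mu_B$ with $\mu_B:B\to A$ the augmentation, the map of the statement fits into a commuting square of $A$-bimodule maps
\beq
\iota\, F_K=F_B\,(\iota\ox_A\iota):K\ox_A K\to B .
\eeq
This is immediate: $\iota$ is a chain map of $A$-bimodule complexes, so it commutes with the left and right $A$-actions that appear in $\mu_{?}\ox_A\id$ and $\id_{?}\ox_A\mu_{?}$, while condition~\ref{hypotheses}(a) gives $\mu_B\iota=\mu_K$; hence the two "edge" components of $F_B(\iota\ox_A\iota)$ are precisely $\iota$ applied to the two edge components of $F_K$. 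Only the canonical identifications $A\ox_A K\cong K\cong K\ox_A A$ (and their analogues for $B$) must be tracked, which is purely formal. Note that this step, and the whole argument, uses only conditions \ref{hypotheses}(a) and \ref{hypotheses}(b), not the diagonal condition (c).

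Next I would define the graded map of degree $1$
\beq
G_K:=\pi\,G\,(\iota\ox_A\iota):K\ox_A K\to K ,
\eeq
using the section $\pi$ from condition~\ref{hypotheses}(b). Since $\pi$ and $\iota$ (hence $\iota\ox_A\iota$) are chain maps of degree $0$, they commute with the differentials without signs, so the differential on the Hom complex gives
\beq
d(G_K)=d_K G_K+G_K d_{K\ox_A K}=\pi\,(d_B G+G\,d_{B\ox_A B})\,(\iota\ox_A\iota)=\pi\,F_B\,(\iota\ox_A\iota),
\eeq
the last equality being Proposition~\ref{dG}. Combining this with the commuting square of the first step and the identity $\pi\iota=\id_K$ from~\ref{hypotheses}(b) yields $d(G_K)=\pi\iota\,F_K=F_K$, so $F_K$ is a boundary in $\Hom_{A^e}(K\ox_A K,K)$, with $G_K$ an explicit contracting homotopy for $F_K$.

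I do not anticipate a genuine obstacle here: once one thinks to pull $G$ back along $\iota\ox_A\iota$ and push forward along $\pi$, the proof is formal. The only point needing care is the sign and identification bookkeeping in the square $\iota F_K=F_B(\iota\ox_A\iota)$ and in the displayed computation of $d(G_K)$ — in particular that the degree-$0$ chain maps $\pi,\iota,\iota\ox_A\iota$ intertwine the differentials with no extra signs, so that the middle equality above reads literally $\pi d_B G(\iota\ox_A\iota)+\pi G d_{B\ox_A B}(\iota\ox_A\iota)$. (A non-constructive alternative is available: $K\ox_A K\to A$ is a bimodule resolution by the K\"{u}nneth formula, $F_K$ is a degree-$0$ cycle in the Hom complex, and both $\mu_K\ox_A\id_K$ and $\id_K\ox_A\mu_K$ induce the identity on $H_0=A$, forcing $F_K$ to be nullhomotopic; but the homotopy-transfer argument has the advantage of producing an explicit $\phi=G_K$ for use in the $\phi$-bracket constructions that follow.)
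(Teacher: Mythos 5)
Your argument is correct, but it takes a genuinely different route from the paper's proof of Lemma~\ref{leamers}. The paper's proof is essentially the abstract one you relegate to your final parenthetical: since $K\ox_A K$ is again a projective bimodule resolution of $A$, the map $\mu_\ast:\Hom_{A^e}(K\ox_A K,K)\to\Hom_{A^e}(K\ox_A K,A)$ is a quasi-isomorphism, and a naturality argument with the unit isomorphisms $\varphi_K:A\ox_A K\cong K$ and $\varphi'_K:K\ox_A A\cong K$ shows that $\mu_\ast(F_K)=\varphi_A(\mu\ox_A\mu)-\varphi'_A(\mu\ox_A\mu)=0$ on the nose, so injectivity of $\mu_\ast$ on cohomology forces the degree-$0$ cycle $F_K$ to be a boundary. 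That proof uses none of hypotheses~\ref{hypotheses}(a)--(c) --- only that $K$ is a projective bimodule resolution. Your transfer argument --- establishing $\iota F_K=F_B(\iota\ox_A\iota)$ and then computing $d(\pi G(\iota\ox_A\iota))=\pi\, d(G)\,(\iota\ox_A\iota)=\pi F_B(\iota\ox_A\iota)=\pi\iota F_K=F_K$ via Proposition~\ref{dG} --- is sound, but it is precisely the content of the paper's later Lemma~\ref{GKpi}, which the authors prove separately because it supplies the canonical explicit homotopy $G_K$ used in Proposition~\ref{fg} and in Subsection~\ref{weaker}. So you buy an explicit contracting homotopy at the cost of invoking hypotheses (a) and (b) (legitimate in this subsection, and you correctly observe that (c) is not needed), whereas the paper's own proof of the lemma is purely existential but hypothesis-free; in effect you have merged Lemma~\ref{leamers} with Lemma~\ref{GKpi}, and both routes are valid.
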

\begin{proof} It suffices to check that $F_K$ maps
to $0$ under the quasi-isomorphism
\beq
\Hom_{A^e}(K\ox_A K, K)\stackrel{\mu_\ast}{\relbar
\joinrel\longrightarrow} \Hom_{A^e}(K\ox_A K, A).
\eeq Since $F_K$ is the difference of the two maps $\mu\ox_A id_K$ and $id_K\ox_A \mu$, composed with the isomorphisms $A\ox_A K\cong K$ and $K\ox_A A\cong K$ respectively, the image of $F_K$ is $0$ if and only if the images of these two maps agree.  \par For any complex $M$ of $A$-bimodules, let $\varphi_M:A\ox_A M\overset{\sim}\to M$ and $\varphi'_M:M\ox_A A\overset{\sim}\to M$ denote the standard isomorphisms.  Since these isomorphisms are natural we will have a commutative diagram $$ \xym{K\ox_A K\ar[rr]^(.55){\mu\ox_A id_K} && A\ox_A K\ar[rr]^(.55){id_A\ox_A \mu}\ar[d]^{\varphi_K} && A\ox_A A\ar[d]^{\varphi_A}\\	&& K\ar[rr]^{\mu} && A.}$$ Whence $$\ba{rl}\mu_\ast\big(\varphi_K(\mu\ox_A id_K)\big)&=\mu\varphi_K(\mu\ox_A id_K)\\ &=\varphi_A (id_A\ox_A\mu)(\mu\ox_A id_K)\\ &=\varphi_A(\mu\ox_A \mu).\ea $$ Similarly, we see $\mu_\ast\big(\varphi'_K(id_A\ox_A A)\big)=\varphi'_A(\mu\ox_A \mu)$.  Since there is an equality $\varphi_A=\varphi'_A$, this gives the desired equality $$\mu_\ast\big(\varphi_K(\mu\ox_A id_K)\big)=\mu_\ast\big(\varphi'_K(id_A\ox_A A)\big),$$ and we conclude $\mu_\ast(F_K)=0$.
\end{proof}

Recall that a contracting homotopy for $F_K$ is a  map $\phi:K\ox_A K\to K$ with $d(\phi):=d_K\phi+\phi d_{K\ox_A K}=F_K$.  The lemma allows us to make the following definition.

\begin{definition}[General $\phi$-circle operation, $\phi$-bracket]\label{phibrak2}
Let $\phi$ be a contracting homotopy for $F_K$, and let $\Delta^{(2)}_K$
be a chain map
from $K$ to $K\ot_A K\ot _A K$. 
(Under hypothesis~\ref{hypotheses}(c), we take 
$\Delta^{(2)}_K:=(\Delta_K\ot id_K) \Delta_K = ( id_K \ot \Delta_K)\Delta_K$.)
The
$\phi$-circle product $f\circ_\phi g$ is the composition  
\beq
f\circ_{\phi} g : = 
f\phi(id_K\ox_A g\ox_A id_K)\Delta^{(2)}_K.
\eeq
The $\phi$-bracket is the graded commutator
\beq
[f,g]_\phi :=f\circ_\phi g-(-1)^{(|f|-1)(|g|-1)}g\circ_\phi f.
\eeq
\end{definition}

Suppose that our resolution $K\to A$ satisfies the freeness property (b$'$).  For example, we could take $K$ to be the Koszul resolution of a PBW deformation of a Koszul algebra.  
(See, e.g., \cite{F,Pr}.) 
We can then express the $\phi$-circle operation and bracket on elements in the generating set $x\in W\subset K$ as
\beq
(f\circ_\phi g)(x)=\sum (-1)^{|g||x_1|}f\big(\phi(x_1\ox g(x_2)\ox x_3)\big)
\eeq
and
\begin{eqnarray*}
[f,g]_\phi(x) &= &\sum (-1)^{|g||x_1|}f\big(\phi(x_1\ox g(x_2)\ox x_3)\big)\\
  & & \hspace{.2cm} -(-1)^{(|g|-1)(|f|-1)}\sum(-1)^{|f||x_1|}g\big(\phi(x_1\ox f(x_2)\ox x_3)\big).
\end{eqnarray*}
Here the sum $\sum x_1\ox x_2\ox x_3$ denotes the element $\Delta^{(2)}(x)$, 
which lies in $W\ox W\ox W\subset K^{\ox_A 3}$ by hypothesis.  In the case that $K=B$ and $\phi=G$, the map $\phi$ simply inserts the apparent missing factor $(-1)^{|x_1|}$ in the above expressions.
\par
We will see in Theorem \ref{Gbracket} that the $\phi$-bracket operation preserves cocycles and coboundaries, and that the induced operation on cohomology is precisely the
Gerstenhaber bracket.
The following lemma will be of significance in a moment.

\begin{lemma}\label{GKpi} 
Let us take $G_K:=\pi G(\iota \ox_A \iota):K\ox_A K\to K$, where $G$ is the standard contracting
homotopy for $F_B$ given in Notation \ref{notations1}(2).  Then
\begin{enumerate}
\item $F_K=\pi F_B(\iota \ox_A \iota)$.
\item $d(G_K)=F_K$.
\end{enumerate}
\end{lemma}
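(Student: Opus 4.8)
\textbf{Proof proposal for Lemma \ref{GKpi}.}

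The plan is to prove the two statements in order, using the compatibility conditions (a)--(c) to push calculations on $K$ back to the already-established facts on $B$ from Section \ref{altC}. For part (1), the key point is that the two maps $\mu\ox_A id_K$ and $id_K\ox_A\mu$ on $K\ox_A K$ are, via $\iota$, restrictions of the corresponding maps $\varepsilon\ox_A id_B$ and $id_B\ox_A \varepsilon$ on $B\ox_A B$ (where $\varepsilon:B\to A$ is the augmentation), since $\iota$ commutes with the augmentations by hypothesis (a) and with the identity trivially. More precisely, I would observe that $\pi F_B(\iota\ox_A\iota)$ sends an element of $K\ox_A K$ into $K$, and then compute it directly using the elementwise formula (\ref{ogFB}) for $F_B$: on the extremal terms $K_0\ox_A K_j$ and $K_i\ox_A K_0$ one gets precisely the left and right $A$-actions, and $\pi$ acts as the identity on the image of $\iota$ by (b). Since $F_K$ is defined (again via the natural isomorphisms $A\ox_A K\cong K\cong K\ox_A A$) to be exactly the difference of these two $A$-actions on the extremal terms, and is zero in the interior when (b$'$) holds, the two maps agree. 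In the general setting one argues more invariantly: both $F_K$ and $\pi F_B(\iota\ox_A\iota)$ become, after composing with $\mu_\ast:\Hom_{A^e}(K\ox_A K,K)\to\Hom_{A^e}(K\ox_A K,A)$, equal to $\varphi_A(\mu\ox_A\mu)-\varphi'_A(\mu\ox_A\mu)=0$ — the argument of Lemma \ref{leamers} applied to $\pi F_B(\iota\ox_A\iota)$ using that $\mu\pi = \varepsilon$ on $B$ (which follows from (a) and (b)) — so the two maps agree up to a boundary; but a slightly more careful bookkeeping of the extremal terms shows they are in fact equal on the nose, which is what the statement claims.

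For part (2), I would use naturality of the differential under the chain maps $\iota$ and $\pi$ together with Proposition \ref{dG}. Write $d(G_K) = d_K G_K + G_K d_{K\ox_A K}$. Since $\iota$ and $\pi$ are chain maps and $G_K = \pi G(\iota\ox_A\iota)$, and since $\iota\ox_A\iota$ is a chain map $K\ox_A K\to B\ox_A B$ (tensor of chain maps), I can slide the differentials through: $d_K\,\pi G(\iota\ox_A\iota) = \pi\, d_B\, G\,(\iota\ox_A\iota)$ and $\pi G(\iota\ox_A\iota)\, d_{K\ox_A K} = \pi\, G\, d_{B\ox_A B}\,(\iota\ox_A\iota)$. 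Adding these gives $d(G_K) = \pi\,\big(d_B G + G d_{B\ox_A B}\big)\,(\iota\ox_A\iota) = \pi\, d(G)\,(\iota\ox_A\iota) = \pi F_B(\iota\ox_A\iota)$ by Proposition \ref{dG}, and this equals $F_K$ by part (1). So part (2) is essentially a formal consequence of part (1) plus the homotopy identity already proved for $B$.

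The main obstacle is getting part (1) exactly right — i.e.\ proving equality of the two maps rather than merely equality up to a boundary. The subtlety is that $\pi F_B(\iota\ox_A\iota)$ a priori only reproduces $F_K$ correctly on the extremal summands $K_0\ox_A K_j$ and $K_i\ox_A K_0$; in the interior $F_B(\iota\ox_A\iota)$ vanishes on $TA\ox TA$ terms by (\ref{ogFB}), but one must be sure that $\iota(K_{>0})$ genuinely lands in the ``interior'' $A\ox TA^{\geq 1}\ox A$ rather than spilling into an extremal term — this is guaranteed precisely by hypothesis (b$'$), or in the general case (b), since the compatibility with augmentations (a) forces $\iota(K_r)$ for $r>0$ to have augmentation zero. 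Once that is pinned down, comparing with the definition $F_K = \mu\ox_A id_K - id_K\ox_A\mu$ on the extremal terms is a routine unwinding of the natural isomorphisms $A\ox_A K\cong K\cong K\ox_A A$, exactly parallel to the check that the two definitions of $F_B$ coincide noted just before Lemma \ref{leamers}.
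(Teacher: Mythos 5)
Your proposal is correct and follows essentially the same route as the paper: part (1) comes down to the identity $\varepsilon\iota=\mu$ from hypothesis (a) together with $\pi\iota=\id_K$ and naturality of the isomorphisms $A\ox_A(-)\cong(-)\cong(-)\ox_A A$, and part (2) is obtained by sliding the differential through the chain maps $\pi$ and $\iota\ox_A\iota$ and invoking Proposition~\ref{dG}. The only stylistic remark is that your detour through $\mu_\ast$ (``agree up to a boundary'') is unnecessary and strictly weaker than what is needed; the first sentence of your argument for (1) already gives the on-the-nose equality, which is all the paper uses.
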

\begin{proof}
Statement (1) follows directly from the definitions of $F_K$ and
$F_B$ given above, the commutative diagram $$\xymatrixrowsep{3mm}\xym{	& B\ar[dr] & \\K\ar[ur]^\iota \ar[rr]& & A}$$of hypothesis \ref{hypotheses}(a), and the fact that $\pi\iota=id_K$.  Statement (2) follows
from (1) since we have 
\beq
d(G_K)=d(\pi G(\iota\ox_A \iota))=\pi d(G)(\iota\ox_A \iota)=\pi F_B(\iota\ox_A \iota)=F_K.
\eeq
\end{proof}

Note that, since $\pi:B\to K$ and $\iota:K\to B$ are quasi-isomorphisms, they induce
quasi-isomorphisms on the Hom complexes
\beq
\pi^\ast:\Hom_{A^e}(K,A)\to \Hom_{A^e}(B,A)\ \text{ and }\ \iota^\ast:\Hom_{A^e}(B,A)\to \Hom_{A^e}(K,A).
\eeq
The latter map is simply restriction to $K$.

\begin{proposition}\label{fg} 
Assume hypotheses~\ref{hypotheses}(a)--(c). 
Given $f$ and $g$ in $\Hom_{A^e}(K,A)$ we have an equality of functions
\beq
f\circ_{G_K} g=\iota^\ast(\pi^\ast f\circ \pi^\ast g)
\eeq
and subsequent equality
\beq
[f,g]_{G_K}=\iota^\ast[\pi^\ast f, \pi^\ast g].
\eeq
\label{mainprep}
\end{proposition}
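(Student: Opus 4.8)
The plan is to prove the first identity, $f\circ_{G_K} g=\iota^\ast(\pi^\ast f\circ \pi^\ast g)$, by rewriting both sides as one and the same composite of maps $K\to A$; the second identity then follows immediately, because $\iota^\ast$ is $k$-linear and $\pi$ is degree-preserving (so $|\pi^\ast f|=|f|$ and $|\pi^\ast g|=|g|$), whence $\iota^\ast$ commutes with forming the graded commutator. Recall that $f\circ_{G_K}g$ is well defined in the first place because $G_K$ is a contracting homotopy for $F_K$, by Lemma~\ref{GKpi}(2).

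First I would expand the right-hand side. Since $\pi^\ast f=f\pi$ and $\pi^\ast g=g\pi$, and the standard circle operation on $B$ is the composite displayed just before Definition~\ref{phi-bracket} (with $G$ the contracting homotopy of Notation~\ref{notations1}(2) and $\Delta^{(2)}_B$ the map of Notation~\ref{notations1}(1)), precomposing with $\iota$ gives
\[
\iota^\ast(\pi^\ast f\circ \pi^\ast g)=(f\pi)\,G\,(id_B\ox_A (g\pi)\ox_A id_B)\,\Delta^{(2)}_B\,\iota .
\]
The single structural input is hypothesis~\ref{hypotheses}(c): writing $\Delta^{(2)}_B=(\Delta_B\ox id_B)\Delta_B$ and iterating $\Delta_B\iota=(\iota\ox_A\iota)\Delta_K$ (all the maps involved are degree-$0$ chain maps, so no Koszul signs appear) one gets $\Delta^{(2)}_B\,\iota=(\iota\ox_A\iota\ox_A\iota)\,\Delta^{(2)}_K$. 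Then, using $\pi\iota=id_K$ from hypothesis~\ref{hypotheses}(b) together with the naturality of the canonical isomorphisms $A\ox_A M\cong M\cong M\ox_A A$, one checks
\[
(id_B\ox_A (g\pi)\ox_A id_B)\,(\iota\ox_A\iota\ox_A\iota)=(\iota\ox_A\iota)\,(id_K\ox_A g\ox_A id_K).
\]
Substituting these two relations, and recalling $G_K=\pi G(\iota\ox_A\iota)$ from Lemma~\ref{GKpi}, the right-hand side collapses:
\[
(f\pi)\,G\,(\iota\ox_A\iota)\,(id_K\ox_A g\ox_A id_K)\,\Delta^{(2)}_K
= f\,G_K\,(id_K\ox_A g\ox_A id_K)\,\Delta^{(2)}_K = f\circ_{G_K}g ,
\]
the last equality being Definition~\ref{phibrak2}. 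Running the same computation with $f$ and $g$ interchanged and then taking the graded commutator yields $[f,g]_{G_K}=\iota^\ast[\pi^\ast f,\pi^\ast g]$.

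The only real obstacle is the sign-and-identification bookkeeping behind the second displayed equation. One must check that the Koszul sign occurring in $id_B\ox_A (g\pi)\ox_A id_B$ coincides with the one in $id_K\ox_A g\ox_A id_K$ — it does, since $\iota$ preserves homological degree and $g\pi$ has the same degree as $g$ — and that the natural isomorphisms absorbing the middle tensor factor $A$ are compatible with $\iota$ and $\pi$. This is precisely the sort of naturality argument already carried out in the proof of Lemma~\ref{leamers}; apart from it, the argument is a purely formal manipulation of composites, with hypothesis~\ref{hypotheses}(c) supplying the one nontrivial commutation.
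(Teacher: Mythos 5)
Your proposal is correct and follows essentially the same route as the paper's proof: expand $\iota^\ast(\pi^\ast f\circ\pi^\ast g)$ as a composite, push $\iota$ through $\Delta^{(2)}_B$ using hypothesis (c), absorb $\pi\iota=id_K$, and recognize $G_K=\pi G(\iota\ox_A\iota)$. Your extra remarks on the Koszul signs and the naturality of the $A\ox_A(-)$ identifications are details the paper passes over silently, but they check out.
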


\begin{proof}
Let us simply expand the functions.
\beq
\ba{rlr}
(f\pi\circ g\pi)\iota&=f\pi G (id_B\ox_A g\pi\ox_A id_B)\Delta_B^{(2)}\iota\\
&=f\pi G (id_B\ox_A g\pi\ox_A id_B) \iota^{\ox 3}\Delta^{(2)}_K\\
&=f\pi G (\iota\ox_A g(\pi\iota)\ox_A \iota)\Delta^{(2)}_K\\
&=f(\pi G(\iota\ox_A \iota)) (id_K\ox_A g\ox_A id_K)\Delta^{(2)}_K & (\text{since }\pi\iota=id_K)\\
&=f G_K(id_K\ox_A g\ox_A id_K)\Delta^{(2)}_K\\
&=f\circ_{G_K} g.
\ea
\eeq
The equality of brackets follows from the fact that the bracket is
defined as the graded $\circ$-commutator.
\end{proof}

%\begin{remark}\label{weaker-hypotheses}
%Our hypotheses may be weakened somewhat: 
%Even if the diagonal maps $\Delta$ do not commute with the embedding $\iota$ of
%$K$ into $B$, that is if condition \ref{hypotheses}(c) does not hold, 
%a sufficient condition for the conclusion of
%Proposition~\ref{fg} to hold up to coboundaries is $\pi \iota = id_K$ and 
%\[
%    (\pi\ot \pi\ot \pi ) \Delta^{(2)}_B \iota = 
%     \Delta^{(2)}_K .
%\]
%The proof is similar to the above proof of Proposition~\ref{fg}. 
%We will see in Section~\ref{cyclic} 
%that this condition holds  for the example
%of a group algebra of a group of prime order. 
%We do not know  optimal hypotheses under which
%the conclusion of Proposition~\ref{fg} is true up to coboundaries. 
%\end{remark} 

Let $\phi$ be any contracting homotopy for $F_K$.  By the same proof
as the one given for Proposition \ref{phibrak}, the differences
\beq
f\circ_\phi g-f\circ_{G_K}g\ \text{and}\ [f,g]_\phi-[f,g]_{G_K}
\eeq
will be  boundaries whenever $f$ and $g$ are cocycles in $\Hom_{A^e}(K,A)$.

\begin{theorem}\label{Gbracket}
Suppose $K$ is a bimodule resolution of $A$ satisfying 
hypotheses~\ref{hypotheses}(a)--(c),  
and let $\phi$ be any contracting homotopy for $F_K$.  Let $f$ and $g$ be
cocycles in $\Hom_{A^e}(K,A)$.
\begin{enumerate}
\item The bracket $[f,g]_\phi$ is a cocycle.
\item If $f$ or $g$ is a boundary, then $[f,g]_\phi$ is a boundary.
\item The induced bracket $[ \ , \ ]_\phi:\mathrm{HH}(A)[1]\ox
  \mathrm{HH}(A)[1]\to\mathrm{HH}(A)[1]$ on cohomology agrees with the
  Gerstenhaber bracket.
\end{enumerate}
\label{thm}
\end{theorem}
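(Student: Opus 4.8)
The plan is to transport everything to the bar resolution $B$. All of the substantive work has already been done: Lemma~\ref{GKpi} produces a concrete contracting homotopy $G_K=\pi G(\iota\ox_A\iota)$ for $F_K$, Proposition~\ref{fg} identifies the $G_K$-bracket with the pullback along $\iota$ of the ordinary Gerstenhaber bracket pulled back along $\pi$, and the remark just before the theorem (proved exactly as Proposition~\ref{phibrak}, via the $K$-analog of Lemma~\ref{phi-G}: any two contracting homotopies for $F_K$ differ by a boundary, since $K\ox_A K$ is again a resolution of $A$ by the K\"unneth formula and hence $\mathrm{H}_{-1}\Hom_{A^e}(K\ox_A K,K)=0$) shows that for cocycles $f,g$ the difference $[f,g]_\phi-[f,g]_{G_K}$ is a coboundary in $\Hom_{A^e}(K,A)$. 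So it suffices to prove the three assertions with $\phi$ replaced by $G_K$ and then add this coboundary.

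For (1): since $\pi$ is an $A^e$-chain map, $\pi^\ast f=f\pi$ and $\pi^\ast g=g\pi$ are cocycles in $\Hom_{A^e}(B,A)$; hence $[\pi^\ast f,\pi^\ast g]$ is a cocycle (Corollary~\ref{corC1}(1) with $\phi=G$, equivalently the classical property \cite{Gerstenhaber63}), and applying the chain map $\iota^\ast$ shows $[f,g]_{G_K}=\iota^\ast[\pi^\ast f,\pi^\ast g]$ is a cocycle, hence so is $[f,g]_\phi$. For (2): if, say, $f$ is a coboundary then so is $\pi^\ast f$, so $[\pi^\ast f,\pi^\ast g]$ is a coboundary (Corollary~\ref{corC1}(2) with $\phi=G$), hence so is $\iota^\ast[\pi^\ast f,\pi^\ast g]=[f,g]_{G_K}$; a coboundary is in particular a cocycle, so the remark still applies and $[f,g]_\phi$ is a coboundary as well.

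For (3): the quasi-isomorphisms $\pi$ and $\iota$ induce mutually inverse isomorphisms $\mathrm{H}^\ast\Hom_{A^e}(K,A)\cong\mathrm{H}^\ast\Hom_{A^e}(B,A)=\mathrm{HH}(A)$ (mutually inverse because $\pi\iota=\id_K$ forces $\iota^\ast\pi^\ast=\id$), and this is precisely the canonical identification along which the Gerstenhaber bracket on $\mathrm{HH}(A)[1]$ is read off. Passing to cohomology classes in $[f,g]_\phi=\iota^\ast[\pi^\ast f,\pi^\ast g]+(\text{coboundary})$ yields $\overline{[f,g]_\phi}=\iota^\ast\big[\,\overline{\pi^\ast f},\overline{\pi^\ast g}\,\big]$, which is exactly the statement that $[\ ,\ ]_\phi$ induces the Gerstenhaber bracket.

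There is no serious obstacle left; the only point requiring care is the bookkeeping in (3), namely confirming that ``the Gerstenhaber bracket on $\mathrm{HH}(A)$'' in the statement is understood via the canonical comparison isomorphism $\pi^\ast$ (equivalently $\iota^\ast$), and checking the degree and sign conventions so that $\pi^\ast$ indeed sends cocycles to cocycles and coboundaries to coboundaries.
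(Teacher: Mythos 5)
Your proposal is correct and follows essentially the same route as the paper: reduce to $\phi=G_K=\pi G(\iota\ox_A\iota)$ via the boundary argument preceding the theorem, then apply Proposition~\ref{fg} together with the fact that $\pi^\ast$ and $\iota^\ast$ induce mutually inverse isomorphisms on cohomology (since $\pi\iota=\id_K$). You even make explicit the K\"unneth/vanishing-in-degree-$(-1)$ justification that the paper leaves implicit in its reduction step, so nothing further is needed.
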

\begin{proof}
By the discussion preceding the statement of the theorem, we may assume
without loss of generality that $\phi=G_K=\pi G(\iota\ox_A \iota)$.
Now (1) and (2) follow directly from Proposition \ref{mainprep} and
the fact that $\pi^\ast$ and $\iota^\ast$ are quasi-isomorphisms.
Since $id_{\mathrm{H}(\Hom(K,A))}=(\pi\iota)^\ast=\iota^\ast \pi^\ast$,
we see that the induced isomorphisms on homology are in fact mutually inverse.  So
we have
\beq
(\iota^\ast)^{-1}=\pi^\ast:\mathrm{H}(\Hom_{A^e}(K,A))
  \stackrel{\cong}{\longrightarrow} \mathrm{H}(\Hom_{A^e}(B,A)).
\eeq
This isomorphism is one of graded Lie algebras since, according to
Proposition~\ref{fg}, we will have an equality
\beq
\pi^\ast([f,g]_{G_K})=\pi^\ast(\iota^\ast[\pi^\ast f, \pi^\ast g])=(\pi^\ast
\iota^\ast)[\pi^\ast f,\pi^\ast g]=[\pi^\ast f,\pi^\ast g]
\eeq
on cohomology.  Finally,  the homologies
$\mathrm{H}(\Hom_{A^e}(K,A))$ and $\mathrm{H}(\Hom_{A^e}(B,A))$ are
precisely the Hochschild cohomology $\mathrm{HH}(A)$.
\end{proof}

\subsection{Formula for $\phi$}
In general, it may be difficult to find a  map $\phi$ satisfying 
$d(\phi):=d_K\phi+\phi d_{K\ox_A K}= F_K$.  Let us give one method for
constructing such a homotopy that is related to constructions of chain
maps via contracting homotopies (for example, as described in Mac Lane
\cite{MacLane}).
Consider the extended complex $K\to A\to 0$, by which we mean the complex $\cdots \to K_1\to K_0\to A\to 0$.  This complex is acyclic and can, alternatively, be described as the mapping cone of the quasi-isomorphism $K\to A$.

The following lemma is general, that is, it does not require
hypotheses~\ref{hypotheses}, only that $K$ be a free $A$-bimodule 
resolution of $A$, as well as the further hypotheses stated in the lemma.

\begin{lemma}\label{h-phi} 
Suppose $K$ is free on a graded subspace $W\subset K$.  Let $h$ be any $k$-linear contracting homotopy for the identity map on the extended complex $K\to A\to 0$.  Take $\phi_{-1}=0$.  Define $\phi$, in each
degree $i\geq 0$,
as the $A^e$-linear map $\phi_i:(K\ox_A K)_i\to K_{i+1}$ given inductively by
the formula 
\beq
\phi_i|_{W\ox A\ox W}:=h_i((F_K)_i-\phi_{i-1}(d_{K\ox_A K})_i)|_{W\ox A\ox W}.
\eeq
Then $d(\phi)=F_K$.
\label{constructor}
\end{lemma}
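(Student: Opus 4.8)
The plan is to verify the homotopy identity $d(\phi) = F_K$ degree by degree, using the inductive definition of $\phi$ together with the contracting homotopy $h$ for the extended complex $K \to A \to 0$. The key structural fact is that since $K$ is free on $W$, the tensor complex $K \ox_A K$ is free on $W \ox A \ox W$ as an $A^e$-module (with the middle $A$ recording the bimodule junction), so an $A^e$-linear map out of $K \ox_A K$ is determined by its restriction to $W \ox A \ox W$. Thus it suffices to check $d(\phi)_i = (F_K)_i$ after restricting to $W \ox A \ox W$ in each degree $i$.

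First I would set up the base case: in degree $0$, $\phi_{-1} = 0$, so the formula gives $\phi_0|_{W \ox A \ox W} = h_0 (F_K)_0|_{W \ox A \ox W}$, and I would compute $d(\phi)_0 = d_K \phi_0 + \phi_{-1} d_{K \ox_A K} = d_K \phi_0$ and check this equals $(F_K)_0$ on $W \ox A \ox W$ using the defining property of $h$ as a contracting homotopy, namely $d_K h + h d = \id$ on the extended complex (being careful that in degree $0$ the "next" differential lands in $A$, which is killed appropriately). Then for the inductive step in degree $i \geq 1$, I would assume $d(\phi)_{i-1} = (F_K)_{i-1}$ and compute
\[
d(\phi)_i = d_K \phi_i + \phi_{i-1} d_{K \ox_A K}
\]
restricted to $W \ox A \ox W$. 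Substituting the formula for $\phi_i$ and writing $\omega := (F_K)_i - \phi_{i-1}(d_{K\ox_A K})_i$, this becomes $d_K h_i \omega + \phi_{i-1} d_{K\ox_A K}$. Applying the homotopy relation $d_K h_i + h_{i-1} d_{K \ox_A K}\text{(via the image)} = \id$ to $\omega$ — more precisely, using $d_K h \omega = \omega - h d_K \omega$ — reduces the problem to showing $h_{i-1} d_K \omega = 0$, equivalently that $d_K \omega = 0$ in the relevant range, which should follow from the inductive hypothesis $d(\phi)_{i-1} = (F_K)_{i-1}$ together with the fact that $F_K$ and $d_{K \ox_A K}$ are chain maps (so $d_K F_K = F_K d_{K \ox_A K}$ and $d_K \phi_{i-1} d_{K \ox_A K} = (F_K)_{i-1} d_{K \ox_A K} - \phi_{i-2} d^2$).

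The main obstacle I expect is bookkeeping with the extended complex and the fact that $h$ is only $k$-linear, not $A^e$-linear, while $\phi$ is required to be $A^e$-linear: the formula only defines $\phi_i$ on $W \ox A \ox W$ and then extends $A^e$-linearly, so I must make sure that the identity $d(\phi)_i = (F_K)_i$, once verified on the free generators $W \ox A \ox W$, genuinely propagates to all of $(K \ox_A K)_i$ by $A^e$-linearity — this works because both $d(\phi)_i$ and $(F_K)_i$ are $A^e$-linear maps, so agreement on a generating set is agreement everywhere. A secondary subtlety is the degree-counting near the bottom of the extended complex, where the homotopy $h$ interacts with the augmentation $K_0 \to A$; one must check that $F_K$ lands in the part of $K$ where $h$ behaves as expected and that no spurious $A$-component appears. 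Once these points are handled, the induction closes and the lemma follows.
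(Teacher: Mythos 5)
Your proof is correct and follows essentially the same route as the paper's: induction on homological degree, restriction to the $A^e$-generating subspace $W\ox A\ox W$, the homotopy identity $d_Kh+hd_K=\id$ on the extended complex applied to $\omega=(F_K)_i-\phi_{i-1}d_i$, and the reduction to $d_K\omega=0$ via the inductive hypothesis, $d^2=0$, and the chain-map property of $F_K$ (the paper handles the degree-$0$ boundary case by starting the induction at negative degrees and noting that $F_0$ lands in the cycles of the extended complex, which is the point you flagged as needing care). No gaps.
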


\begin{proof}
To simplify notation, take $F=F_K$ and $d=d_K$, or $d_{K\ox_A K}$ when appropriate.  Let us consider $F$ and $\phi$ as maps to the extended complex.  Note that, since $F_0$ has image in the
space of degree $0$ cycles $Z_0(K\to A\to 0)$, this new
version of $F$ will still be a chain map.  Take $\phi_j=0$ for all
negative $j$.  Then for all negative $j$ the equality
$d_{j+1}\phi_j+\phi_{j-1}d_j=F_j$ holds, since both
sides are just $0$.  Now suppose, for a given $i$, that for all $j<i$
the formula $d_{j+1}\phi_j+\phi_{j-1}d_j=F_j$ holds.
Then after restricting to the generating subspace $W\ox A\ox
W\subset K\ox_A K$, 
\beq
\ba{l}
(d_{i+1}\phi_i+\phi_{i-1}d_i)= d_{i+1}h_i(F_i-\phi_{i-1}d_i)+\phi_{i-1}d_i\\
\hspace{5mm}= (F_i-\phi_{i-1}d_i)-h_{i-1}d_i(F_i-\phi_{i-1}d_i)+\phi_{i-1}d_i\\
\hspace{5mm}=F_i-\phi_{i-1}d_i-h_{i-1}d_iF_i+h_{i-1}d_i\phi_{i-1}d_i+\phi_{i-1}d_i\\
\hspace{5mm}=F_i-h_{i-1}d_iF_i+h_{i-1}d_i\phi_{i-1}d_i\\
\hspace{5mm}=F_i-h_{i-1}d_iF_i+h_{i-1}F_{i-1}d_i-h_{i-1}\phi_{i-2}d_{i-1}d_i\\
\hspace{5mm}=F_i-h_{i-1}d_iF_i+h_{i-1}F_{i-1}d_i\\
\hspace{5mm}=F_i ,
\ea
\eeq
whence $d(\phi)=F$.
\end{proof}

We will use the formula of Lemma~\ref{h-phi} in Sections~\ref{snbracket}
and \ref{cyclic}.

\subsection{$\phi$-brackets under weaker conditions}\label{weaker} 
In the remainder of this section, we describe some weaker conditions under which
the conclusion of Theorem~\ref{Gbracket} still holds.
We will need this more general statement in 
Section~\ref{cyclic} below.
For the following lemma, we assume only hypotheses~\ref{hypotheses}(a) and (b),
and we let $\Delta^{(2)}_K$ be a chain map from $K$ to $K\ot _A K \ot_A K$.
(We do not assume that there is a coassociative chain map $\Delta:K\rightarrow
K\ot_A K$ from which $\Delta^{(2)}_K$ is defined.)

Recall that we have 
the canonical contracting homotopy $G_K:=\pi G(\iota\ox_A \iota):
K\ox_A K\to K$ for $F_K$
(as Lemma~\ref{GKpi} does not require hypothesis~\ref{hypotheses}(c)). 

\begin{lemma}\label{lemma-weaker}
Assume hypotheses~\ref{hypotheses}(a) and (b). 
For cocycles $f,g\in \Hom_{A^e}(K,A)$, the difference
\beq
\iota^\ast(\pi^\ast f\circ \pi^\ast g)- f G_K 
(id_K\ox_A g\ox_A id_K) (\pi\ox_A \pi\ox_A \pi)\Delta_B^{(2)}\iota
\eeq
is a boundary.  If $\Delta_K^{(2)} = 
(\pi\ox_A \pi\ox_A \pi)\Delta_B^{(2)}\iota$, 
then
\beq
\iota^\ast(\pi^\ast f\circ \pi^\ast g)-f\circ_\phi g\hspace{5mm}\text{and}\hspace{5mm} i^\ast[\pi^\ast f,\pi^\ast g]-[f,g]_\phi
\eeq
are boundaries.
\end{lemma}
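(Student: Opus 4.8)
The plan is to mimic the proof of Proposition~\ref{fg}, but carefully tracking where hypothesis~\ref{hypotheses}(c) was used and replacing each such use by a direct manipulation of the comparison maps $\iota$ and $\pi$. Recall that in the proof of Proposition~\ref{fg} the only place the coassociative diagonal on $K$ entered was the step $\Delta_B^{(2)}\iota = \iota^{\ox 3}\Delta_K^{(2)}$. Without hypothesis~\ref{hypotheses}(c) we no longer have such an intertwining; instead we will expand $\iota^\ast(\pi^\ast f\circ \pi^\ast g)$ literally. By definition of the standard circle operation on $B$ and of $\pi^\ast$, $\iota^\ast$,
\beq
\iota^\ast(\pi^\ast f\circ \pi^\ast g) = (f\pi)\, G\, (id_B\ox_A g\pi\ox_A id_B)\,\Delta_B^{(2)}\,\iota .
\eeq
Now I would insert the identity $id_B$ in the form $\iota\pi$ is \emph{not} available, but $\pi\iota = id_K$ \emph{is}; the trick is to rewrite $G(id_B\ox_A g\pi\ox_A id_B)$ using $G_K = \pi G(\iota\ox_A\iota)$. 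Concretely, since $g\pi$ factors through $K$, one can move a $\pi\ox_A\pi\ox_A\pi$ past $(id_B\ox_A g\pi\ox_A id_B)$ at the cost of pre-composing appropriately; the cleanest route is to observe that
\beq
(f\pi)\,G\,(id_B\ox_A g\pi\ox_A id_B) = f\,\big(\pi G(\iota\ox_A\iota)\big)\,(id_K\ox_A g\ox_A id_K)\,(\pi\ox_A\pi\ox_A\pi)
\eeq
after using $\pi\iota = id_K$ on the middle factor; this is exactly the identity $f G_K(id_K\ox_A g\ox_A id_K)(\pi\ox_A\pi\ox_A\pi)$. Composing with $\Delta_B^{(2)}\iota$ on the right yields precisely the second term in the asserted difference, so in fact that difference is \emph{zero}, not merely a boundary — which is even stronger than claimed and certainly implies the stated conclusion. (If a sign subtlety in the Koszul-sign bookkeeping of $id_B\ox_A g\pi\ox_A id_B$ obstructs exact equality, one still gets equality up to a boundary by the quasi-isomorphism argument of Lemma~\ref{phi-G}, since both maps are contracting homotopies composed with cocycles; but I expect literal equality.)

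For the second assertion, suppose $\Delta_K^{(2)} = (\pi\ox_A\pi\ox_A\pi)\Delta_B^{(2)}\iota$. Then by Definition~\ref{phibrak2} and the first part,
\beq
f\circ_{G_K} g = f\,G_K\,(id_K\ox_A g\ox_A id_K)\,\Delta_K^{(2)} = f\,G_K\,(id_K\ox_A g\ox_A id_K)\,(\pi\ox_A\pi\ox_A\pi)\,\Delta_B^{(2)}\,\iota,
\eeq
which equals $\iota^\ast(\pi^\ast f\circ\pi^\ast g)$ by the first part; so $\iota^\ast(\pi^\ast f\circ\pi^\ast g) - f\circ_{G_K} g$ is a boundary (in fact zero). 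Next, for a general contracting homotopy $\phi$ for $F_K$, the remark immediately preceding Theorem~\ref{Gbracket} (obtained ``by the same proof as the one given for Proposition~\ref{phibrak}'') says $f\circ_\phi g - f\circ_{G_K} g$ is a boundary whenever $f,g$ are cocycles. Adding the two boundary statements gives that $\iota^\ast(\pi^\ast f\circ\pi^\ast g) - f\circ_\phi g$ is a boundary. The bracket statement then follows formally: each bracket is the graded $\circ$-commutator of its circle operation, $\iota^\ast$ and $\pi^\ast$ are additive, and a graded-commutator of maps differing by boundaries differs by a boundary — so $\iota^\ast[\pi^\ast f,\pi^\ast g] - [f,g]_\phi$ is a boundary. (Note the typo ``$i^\ast$'' in the statement should read $\iota^\ast$.)

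The main obstacle I anticipate is the bookkeeping in the first displayed identity: correctly threading the three copies of $\pi$ through the map $id_B\ox_A g\pi\ox_A id_B$ while respecting the Koszul signs spelled out after Notation~\ref{notations1}, and confirming that $\pi\iota = id_K$ collapses the middle slot exactly as in the chain of equalities in the proof of Proposition~\ref{fg}. Because $\pi$ and $\iota$ are genuine $A^e$-chain maps (not just $k$-linear), naturality of $\ox_A$ handles the outer slots cleanly; the middle slot works because $g\pi\iota = g$. Everything downstream — the reduction to $G_K$, the comparison $\phi$ versus $G_K$, and the passage from circle operations to brackets — is then a routine assembly of results already established in the excerpt.
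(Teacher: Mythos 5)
Your first displayed identity is false, and the failure is not a sign subtlety. Unwinding the right-hand side gives
\beq
f\,\pi G(\iota\ox_A\iota)\,(id_K\ox_A g\ox_A id_K)(\pi\ox_A\pi\ox_A\pi)
= f\,\pi G(\iota\pi\ox_A\iota\pi)\,(id_B\ox_A g\pi\ox_A id_B),
\eeq
so to match the left-hand side $f\pi G(id_B\ox_A g\pi\ox_A id_B)$ you would need $\pi G(\iota\pi\ox_A\iota\pi)=\pi G$. But only $\pi\iota=id_K$ is available; $\iota\pi$ is an idempotent of $B$ onto the image of $K$, not the identity, so these two maps genuinely differ. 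This is precisely where the proof of Proposition~\ref{fg} used hypothesis~\ref{hypotheses}(c): the relation $\Delta_B^{(2)}\iota=\iota^{\ox 3}\Delta_K^{(2)}$ supplies copies of $\iota$ to the \emph{right} of the $\pi$'s, producing $\pi\iota=id_K$, whereas here the $\iota$'s coming from $G_K=\pi G(\iota\ox_A\iota)$ sit to the \emph{left} of the $\pi$'s and produce $\iota\pi$, which does not collapse. So the difference in the lemma is not zero, and your main route fails.

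Your parenthetical fallback is in fact the actual content of the proof, but as stated it has no footing: to argue that $\pi G$ and $\pi G(\iota\pi\ox_A\iota\pi)$ differ by a boundary you must show they are cycles with the same differential, i.e.\ you must verify $d\big(\pi G(\iota\pi\ox_A\iota\pi)\big)=\pi F_B(\iota\pi\ox_A\iota\pi)=\pi F_B=d(\pi G)$. The middle equality $\pi F_B(\iota\pi\ox_A\iota\pi)=\pi F_B$ is the one nontrivial check (it follows from the explicit formula (\ref{ogFB}) for $F_B$ together with $\pi\iota=id_K$), and you never state it. Once it is in place, $\pi G(\iota\pi\ox_A\iota\pi)-\pi G$ is a degree $-1$ cycle in $\Hom_{A^e}(B\ox_A B,K)$, hence a boundary since the cohomology of that complex vanishes in negative degrees, and pre- and post-composing with the cocycles $f$, $id\ox_A g\pi\ox_A id$, $\Delta_B^{(2)}$, $\iota$ gives the first assertion. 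Your deduction of the second assertion from the first (adding the boundary $f\circ_\phi g-f\circ_{G_K}g$ and passing to graded commutators) is correct and agrees with the paper.
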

\begin{proof}
We have
\beq
\iota^\ast(\pi^\ast f\circ \pi^\ast g)=f\pi G (id\ox g\pi\ox id)\Delta^{(2)}_B\iota
\eeq
and
\beq
\ba{rl}
f G_K (id_K\ox g\ox id_K) (\pi\ox \pi\ox \pi)\Delta_B^{(2)}\iota&=
  f\pi G(\iota\ox \iota)(\pi\ox g\pi\ox \pi)\Delta^{(2)}_B\iota\\
&= f\pi G(\iota\pi\ox \iota\pi)(id\ox g\pi\ox id)\Delta^{(2)}_B\iota.
\ea
\eeq
Now one can check that $\pi F_B(\iota\pi\ox \iota\pi)=\pi F_B$ 
(since  $\pi\iota=id_K$ and by the definition of $F_B$).  So
\beq
d(\pi G(\iota\pi\ox \iota\pi))=\pi d(G) (\iota\pi\ox \iota\pi)=
\pi F_B(\iota \pi\ox \iota\pi)=\pi F_B=d(\pi G),
\eeq
and, since cohomology vanishes in negative degrees, $\pi G(\iota\pi\ox \iota\pi)-\pi G$ 
is a boundary.  It follows that this difference
\beq
\ba{l}
\iota^\ast(\pi^\ast f\circ \pi^\ast g)- f G_K (id_K\ox_A g\ox_A id_K) 
(\pi\ox_A \pi\ox_A \pi)\Delta_B^{(2)}\iota\\
=f\pi G (id\ox g\pi\ox id)\Delta^{(2)}_B\iota -f\pi G(\iota \pi\ox 
 \iota\pi)(id\ox g\pi\ox id)\Delta^{(2)}_B\iota
\ea
\eeq
is a boundary as well, since all of $f, \pi , \iota , g, \Delta^{(2)}_B$ are cycles. 
\end{proof}

%\begin{corollary}
%If $\Delta_K$ satisfies $(i\ox_A i)\Delta_K=\Delta_B i$ then
%\beq
%(p\ox_A p\ox_A p)\Delta_B^{(2)}i=\Delta_K^{(2)}(=p^{\ox_A 3}i^{\ox_A 3} \Delta_K^{(2)})
%\eeq
%\end{corollary}
%\begin{proof}
%We will have
%\beq
%(p\ox_A p\ox_A p)\Delta_B^{(2)}i=(pi\ox_A pi\ox_A pi)\Delta_K^{(2)}=\Delta_K^{(2)}.
%\eeq
%\end{proof}
%\end{document}

It follows that under hypotheses~\ref{hypotheses}(a) and (b), taking 
 $\Delta_K^{(2)}: = 
(\pi\ox_A \pi\ox_A \pi)\Delta_B^{(2)}\iota$
in Definition~\ref{phibrak2}, 
the conclusion of Theorem~\ref{Gbracket} holds. 
Thus $\phi$-brackets may be defined in a fairly general setting,
at the expense of dealing more directly with maps $\pi$, $\iota$ comparing
to the bar resolution. Note that Definition~\ref{phibrak2} can be used
to define other versions of $\phi$-bracket, given other choices of chain map
$\Delta^{(2)}_K$. At the moment, we do not know which of these other $\phi$-brackets are well-defined
on cohomology, nor whether they have useful properties. 
\par

As we will see in the example of Section \ref{cyclic}, one may be able to produce a satisfactory map $\Delta^{(2)}_K$ without any explicit reference to $\iota$ or $\pi$, and produce a subsequent candidate for the Gerstenhaber bracket.  In the example of Section \ref{cyclic} we check that our map $\Delta^{(2)}_K$ is of the form $\Delta_K^{(2)}: = 
(\pi\ox_A \pi\ox_A \pi)\Delta_B^{(2)}\iota$ for some choice of $\iota$ and $\pi$.

\section{Recovering Schouten-Nijenhuis brackets for  polynomial rings}
\label{snbracket}
\subsection{Review of the Koszul resolution} Let $A=k[x_1,\dots, x_n]$
be the polynomial ring in $n$ variables.  We take $V$ to be the
$k$-vector space with basis $\{x_1,\dots, x_n\}$.  As a formality, let $x_0=1$.  

\begin{definition}
Let $S_i$ denote the symmetric group on $i$ symbols. 
For any $v_1,\dots, v_i\in V$,  let $o(v_1,\dots, v_i)$ denote the
$S_i$-orbit sum
\beq
o(v_1,\dots, v_i)=\sum_{\sigma\in S_i}\mathrm{sgn}(\sigma)v_{\sigma(1)}\ox\dots\ox v_{\sigma(i)}
\eeq
in $V^{\ox n}\subset A^{\ox n}$.  We take $o(\varnothing):=1$.
\end{definition}

Let $W$ denote the graded subspace $\oplus_{i\geq 0} o(V,\dots, V)$ in
$TA$.  One can check that $W$ is a subcoalgebra of $TA$ and that
$K=K(A):=A\ox W\ox A$ is a subcomplex of the bar resolution $B=(A\ox
TA\ox A,d)$.  (See also \cite{BS}, \cite{N}, and (\ref{cmltp}) below.)  It is well known that the embedding
$K\to B$ is a quasi-isomorphism, i.e.\ that $A$ is Koszul.
In the following lemma, the notation $\hat{v}_l$ indicates that 
$v_l$ has been removed.

\begin{lemma}
The differential on $K$ is given on monomials by
\beq
\ba{l}
a\ox o(v_1,\dots, v_i)\ox a'\\
\mapsto \sum_l(-1)^{l+1}a v_l\ox o(v_1,\dots,
\hat{v_l},\dots, v_i)\ox a'-(-1)^{l+1}a \ox o(v_1,\dots,
\hat{v_l},\dots, v_i)\ox v_la' . 
\ea
\eeq
\label{lemmm}
\end{lemma}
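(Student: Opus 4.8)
The plan is to identify the differential on $K$ by transporting the differential on $B$ through the embedding $\iota\colon K\hookrightarrow B$, using the explicit description of $W\subset TA$ in terms of the orbit sums $o(v_1,\dots,v_i)$. Concretely, since $K=A\ox W\ox A$ is a subcomplex of the bar resolution $(A\ox TA\ox A, d)$ with $d$ given by the alternating sum (\ref{dffrntl}), I would simply apply that formula to a monomial $a\ox o(v_1,\dots,v_i)\ox a'$, where $o(v_1,\dots,v_i)=\sum_{\sigma\in S_i}\mathrm{sgn}(\sigma)\, v_{\sigma(1)}\ox\cdots\ox v_{\sigma(i)}$, and then verify that the result collapses to the stated expression supported only on the two ``extremal'' terms (those where $a$ or $a'$ absorbs a variable).

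First I would expand: writing $a_0=a$, $a_{i+1}=a'$ and $a_j=v_{\sigma(j)}$ for $1\le j\le i$, the differential produces, for each $\sigma\in S_i$, the boundary term $av_{\sigma(1)}\ox v_{\sigma(2)}\ox\cdots$, the term $\cdots\ox v_{\sigma(i)}a'$, and the ``interior'' terms $\cdots\ox v_{\sigma(l)}v_{\sigma(l+1)}\ox\cdots$ for $1\le l\le i-1$. The key observation is that the interior terms cancel in pairs after summing over $S_i$ with signs: since $A=k[x_1,\dots,x_n]$ is commutative, $v_{\sigma(l)}v_{\sigma(l+1)}=v_{\sigma(l+1)}v_{\sigma(l)}$, so the transposition $\tau=(l\ \ l+1)$ pairs the $\sigma$-term with the $\sigma\tau$-term; these have the same value but opposite signs because $\mathrm{sgn}(\sigma\tau)=-\mathrm{sgn}(\sigma)$. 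Hence the entire interior contribution vanishes, and only the two extremal sums survive.

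Next I would repackage the two surviving sums. For the left extremal term, grouping the sum over $\sigma\in S_i$ according to the value $l=\sigma(1)$, the inner sum over permutations fixing that first value is exactly $\pm\, o(v_1,\dots,\hat v_l,\dots,v_i)$; tracking the sign of moving $v_l$ into the first slot yields the factor $(-1)^{l+1}$, giving $\sum_l (-1)^{l+1} a v_l\ox o(v_1,\dots,\hat v_l,\dots,v_i)\ox a'$. The right extremal term is handled symmetrically by grouping according to $\sigma(i)$, producing $-\sum_l(-1)^{l+1} a\ox o(v_1,\dots,\hat v_l,\dots,v_i)\ox v_l a'$. This matches the claimed formula. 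I would also note the edge cases: for $i=0$, $o(\varnothing)=1$ and the differential is the usual $a\ox 1\ox a'\mapsto aa'-aa'$ issue handled by the degree-$0$ term; for $i=1$ there are no interior terms and the formula is immediate.

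The main obstacle is bookkeeping the signs: one must be careful that the Koszul sign $(-1)^{l+1}$ appearing in the answer is precisely the sign accumulated when extracting $v_l$ from position $\sigma^{-1}(l)$ and recognizing the remaining alternating sum as an orbit sum on $i-1$ letters, and that this is consistent between the left and right extremal terms. Everything else is a routine application of commutativity of $A$ together with the sign-changing behavior of $\mathrm{sgn}$ under transpositions. I would present the cancellation of interior terms as the crux and leave the reindexing of the extremal sums as a short verification. (One may alternatively cite \cite{BS} or \cite{N} for the coalgebra structure on $W$, but the direct computation above is self-contained given (\ref{dffrntl}).)
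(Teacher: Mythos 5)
Your proof is correct and follows the same route as the paper, which simply states that the lemma "follows by direct computation and the fact that $v_lv_m-v_mv_l$ is $0$ in $A$": namely, apply the bar differential (\ref{dffrntl}) to the orbit sum, cancel the interior terms in pairs via adjacent transpositions using commutativity of $A$, and reindex the two extremal sums to extract the signs $(-1)^{l+1}$. Your write-up supplies the details the paper omits, and the sign bookkeeping checks out.
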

\begin{proof}
This follows by  direct computation and the fact that
$v_lv_m-v_mv_l$ is $0$ in $A$ for each $v_l, v_m\in V$.
\end{proof}

We choose the ordering $x_1<x_2<\dots<x_n$ on the generators of $A$ and
call an element 
\beq
1\ox o(x_{i_1},\dots, x_{i_s})\ox x_{j_1}\dots x_{j_t}\ox o(x_{k_1},\dots  , x_{k_u})\ox 1
\eeq
in $k\ox W\ox A\ox W\ox k$ an ordered ``monomial'' if  $x_{i_l}<
x_{i_{l+1}}$, $x_{j_l}\leq x_{j_{l+1}}$, and $x_{k_l}<x_{k_{l+1}}$,
for all $l$.  We define ordered monomials in $A\ox W\ox k$ and in  $k\ox W\ox
A$ similarly.  The $A^e$ generating subspaces $k\ox W\ox k$ and $k\ox
W\ox A\ox W\ox k$, of $K$ and $K\ox_A K$ respectively, are spanned
over $k$ by the respective sets of ordered monomials.
\par
We employ a slight variation of a left $k$-linear contracting homotopy for
the identity on the extended complex $K\to A\to 0$ given in \cite{Witherspoon-Zhou}.
In homological degrees $-1$ and $0$, which are $A$ and $A\ox A$ respectively, $h$ is given by the formula
\beq
h:\ba{l} a\mapsto 1\ox a\\ x_{j_1}\dots x_{j_t}\ox a\mapsto \sum_{1\leq\nu\leq t} x_{j_1}\dots x_{j_{\nu-1}}\ox o(x_{j_\nu})\ox x_{j_{\nu+1}}\dots x_{j_t}a,\ea
\eeq
for any ordered monomial $ x_{j_1}\dots x_{j_t}\ox 1$ in $A\ox A=K_0$
and $a$ in $A$.  In higher
degrees we define $h$ to be the right $A$-linear map specified on
ordered monomials by the formulae
\beq
\ba{c}
h: x_{j_1}\dots x_{j_t}\ox o(x_{k_1},\dots , x_{k_u})\ox 1\\
\mapsto (-1)^u\sum_{x_{j_\nu}>x_{k_u}}x_{j_1},\dots ,
x_{j_{\nu-1}}\ox o(x_{k_1},\dots x_{k_u}, x_{j_\nu})\ox
x_{j_{\nu+1}}\dots x_{j_t}. 
\ea
\eeq
When the indexing set $\{x_{j_\nu}:x_{j_\nu}>x_{k_u}\}$ is empty, the
sum is indeed taken to be $0$.
\par
Using Lemma \ref{constructor} and the contracting homotopy $h$ given
above, one can easily construct a contracting homotopy $\phi:K\ox_A
K\to K$ for $F_K$ in low degrees.  One then deduces from this
information the following general formula.

\begin{definition}
We define the $A^e$-linear map $\phi:K\ox_A K\to K$ on ordered
monomials by the formulas
\\
$\phi: 1\ox x_{j_1}\dots x_{j_t}\ox o(x_{k_1},\dots, x_{k_u})\ox 1$
\beq
\mapsto (-1)^u\sum_{x_{k_u}< x_{j_\nu}} x_{j_1}\dots x_{j_{\nu-1}}\ox o(x_{k_1},\dots , x_{k_u},x_{j_\nu})\ox x_{j_{\nu+1}}\dots x_{j_t}
\eeq
$
\phi:1\ox o(x_{i_1},\dots, x_{i_s})\ox x_{j_1}\dots x_{j_t}\ox 1$
\beq
\mapsto \sum_{x_{j_\nu}<x_{i_1}}x_{j_1}\dots x_{j_{\nu-1}}\ox o(x_{j_\nu}, x_{i_1},\dots, x_{i_s})\ox x_{j_{\nu+1}}\dots x_{j_t}
\eeq
on $K_0\ot_AK_u$ and on $K_s\ot_A K_0$ ($u,s\geq 0$), and\\
$\phi:1\ox o(x_{i_1},\dots, x_{i_s})\ox x_{j_1}\dots x_{j_t}\ox o(x_{k_1},\dots , x_{k_u})\ox 1$
\beqlbl
\mapsto \sum_{x_{k_u}<x_{j_\nu}<x_{i_1}}(-1)^{su+u}x_{j_1}\dots x_{j_{\nu-1}}\ox o(x_{k_1},\dots x_{k_u},x_{j_\nu}, x_{i_1},\dots, x_{i_s})\ox x_{j_{\nu+1}}\dots x_{j_t}.
\label{bla}
\eeqlbl
\end{definition}

\begin{proposition}
The map $\phi:K\ox_A K\to K$ satisfies $d(\phi):=d_K\phi+\phi d_{K\ox_A K}=F_K$.
\label{proporp}
\end{proposition}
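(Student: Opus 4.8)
The plan is to verify the identity $d_K\phi+\phi d_{K\ox_A K}=F_K$ directly, evaluating both sides on the $A^e$-generating subspace $k\ox W\ox A\ox W\ox k$ of $K\ox_A K$; since every map involved is $A^e$-linear and this subspace is spanned by the ordered monomials introduced above, this suffices. It is convenient to organize the check according to the three pieces into which the definitions of $\phi$ and of $F_K$ are split: the two extremal pieces $K_0\ox_A K_u$ and $K_s\ox_A K_0$, and the interior piece $K_s\ox_A K_u$ with $s,u\geq 1$. Throughout, the differential on $K$ is computed by Lemma~\ref{lemmm} and the differential on $K\ox_A K$ by the Leibniz rule $d(x\ox y)=d(x)\ox y+(-1)^{|x|}x\ox d(y)$.

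First I would handle the extremal pieces. On $K_0\ox_A K_u$ and on $K_s\ox_A K_0$ the map $F_K=\mu\ox_A id_K-id_K\ox_A \mu$ is, under the identifications $A\ox_A K\cong K\cong K\ox_A A$, given simply by the left or right $A$-action, exactly as in the elementwise formula (\ref{ogFB}) with $B$ replaced by $K$. Expanding $d_K\phi$ and $\phi d_{K\ox_A K}$ on an ordered monomial $1\ox x_{j_1}\cdots x_{j_t}\ox o(x_{k_1},\dots,x_{k_u})\ox 1$ (and on $1\ox o(x_{i_1},\dots,x_{i_s})\ox x_{j_1}\cdots x_{j_t}\ox 1$), one finds that the two sums telescope, leaving precisely $F_K$ of the monomial. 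These are essentially the ``low degree'' computations already alluded to, now run for general $u$ and $s$; the only subtlety is tracking the sign $(-1)^u$ and the re-ordering of orbit sums when a generator $x_{j_\nu}$ is inserted into, or a generator $x_{k_l}$ is extracted from, a symbol $o(\cdots)$.

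Next, the interior piece. On $K_s\ox_A K_u$ with $s,u\geq 1$ both $W$-factors have positive degree, so $F_K$ vanishes there and the claim reduces to $d_K\phi=-\phi d_{K\ox_A K}$ on $k\ox W_s\ox A\ox W_u\ox k$. I would compute the left-hand side by applying Lemma~\ref{lemmm} to the outer $W$-factors of the image of $\phi$ in (\ref{bla}), and the right-hand side by first applying $d_{K\ox_A K}$ — which, again by Lemma~\ref{lemmm}, produces terms in which either a generator is stripped off one of the two $W$-blocks, or the two blocks get joined through the middle copy of $A$ — and then applying $\phi$ term by term, using the formulas on $K_s\ox_A K_0$, $K_0\ox_A K_u$, and the interior formula once more. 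The decisive point is a cancellation: the terms created by differentiating the symbol $o(x_{k_1},\dots,x_{k_u},x_{j_\nu},x_{i_1},\dots,x_{i_s})$ on the left are matched, with opposite sign, by the $\phi$-images of the terms of $d_{K\ox_A K}$, and the inequality constraints $x_{k_u}<x_{j_\nu}<x_{i_1}$ are exactly what make these matches a bijection.

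The main obstacle I anticipate is purely combinatorial: keeping the sign $(-1)^{su+u}$ consistent through all the re-sorting of the orbit sums $o(\cdots)$, and checking that when a generator is removed from $o(x_{k_1},\dots,x_{k_u},x_{j_\nu},x_{i_1},\dots,x_{i_s})$ (lowering $u$ or $s$) the resulting constraint on $x_{j_\nu}$ lines up with the constraint coming from the corresponding term of $\phi d_{K\ox_A K}$. An alternative that sidesteps much of this is to prove, by induction on homological degree, that the closed formula for $\phi$ agrees on $W\ox A\ox W$ with the map produced by the recursion of Lemma~\ref{h-phi} applied to the homotopy $h$ fixed above, i.e. that $\phi_i|_{W\ox A\ox W}=h_i\big((F_K)_i-\phi_{i-1}(d_{K\ox_A K})_i\big)|_{W\ox A\ox W}$; Lemma~\ref{h-phi} then yields $d(\phi)=F_K$ immediately, and the inductive step involves only the single homotopy $h$ rather than a full telescoping identity.
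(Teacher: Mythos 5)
The paper does not actually prove Proposition~\ref{proporp}: the authors state only that the verification is ``a delicate, but straightforward calculation'' and omit it, so there is no detailed argument of theirs to compare yours against. Your plan has the right architecture. Checking on the $A^e$-generating subspace $k\ox W\ox A\ox W\ox k$ is legitimate since all maps involved are $A^e$-linear; the split into the two extremal pieces (where $F_K$ acts by the left or right $A$-action) and the interior piece $K_s\ox_A K_u$ with $s,u\geq 1$ (where $F_K=0$ because $\mu$ vanishes in positive homological degree) is the correct case analysis; and the identity to be verified on the interior piece is indeed $d_K\phi=-\phi d_{K\ox_A K}$. This is, as far as one can tell, exactly the computation the authors have in mind.

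That said, your write-up defers precisely the part of the argument that carries all of the content. Assertions such as ``the two sums telescope'' and ``the terms \dots are matched, with opposite sign, by the $\phi$-images of the terms of $d_{K\ox_A K}$'' are the conclusion of the delicate bookkeeping, not a demonstration of it. In particular, the interaction between the constraint $x_{k_u}<x_{j_\nu}<x_{i_1}$ in (\ref{bla}) and those terms of $d_{K\ox_A K}$ that strip the extremal generators $x_{k_u}$ or $x_{i_1}$ off a $W$-block (thereby relaxing the constraint and creating new admissible $\nu$) is where the real work lies, and it is not carried out; nor is the sign $(-1)^{su+u}$ ever actually propagated through a reordering. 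Your alternative --- showing by induction on homological degree that the closed formula agrees on $W\ox A\ox W$ with the map produced by the recursion of Lemma~\ref{h-phi} applied to the homotopy $h$ fixed in the text --- is the more promising route: it is how the authors say they arrived at the formula in the first place, it localizes the verification to a single application of $h$ per degree, and Lemma~\ref{h-phi} then delivers $d(\phi)=F_K$ with no further argument. Either way, to count as a proof rather than a proof outline, the computation needs to be performed, not just organized.
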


We omit the proof of this proposition, which is a delicate, but
straightforward calculation.  

\subsection{Computing the bracket directly from $\phi$ and Theorem
  \ref{Gbracket}}

Before we begin let us make a remark.  In general, one wants to be
strategic in computing the Lie bracket.  One should probably use some
additional structures on Hochschild cohomology, such as the cup
product in combination with (\ref{derivation}), 
additional gradings, etc.  However, we are able to recover here, essentially with a single calculation, a formula for the brackets of cocycles of arbitrary degree via Theorem \ref{Gbracket}.
\par

We will employ the standard isomorphism
\beq
\ba{c}
A[\partial_1,\dots,\partial_n]\to\Hom_{A^e}(K,A)=\mathrm{HH}(A)\\
\partial_i\mapsto (a\ox o(x_j)\ox a'\mapsto \delta_{ij}aa').
\ea
\eeq
Here the generators $\partial_i$ are given degree 1,
and $A[\partial_1,\dots,\partial_n]$ denotes the free graded
commutative $A$-algebra on these generators. 
We identify the monomial $\partial_{i_1}\dots\partial_{i_s}$ 
with the  function dual to the orbit sum $(-1)^{\sum_{\ell=1}^{s-1}\ell}o(x_{i_1},\dots,
x_{i_s})$ in $\Hom_{A^e}(K,A)=\Hom_k(W,A)$.

\par
It will be convenient to have a bit more notation for the statement of
the next proposition.

\begin{notation}
For any ordered set $I=\{i_1,\dots, i_s\}$ of integers satisfying 
$1\leq i_k\leq n$ for all $k$, we take
\beq
\partial_I:=\partial_{i_1}\dots\partial_{i_s}\in \Hom_{A^e}(K,A)
\eeq
and 
\beq
o(x_I)=o(x_{i_1},\dots, x_{i_s})\in W.
\eeq
For $i_k\in I$, we take
\beq
I(k):=\{i_1,\dots, i_{k-1}\}\hspace{5mm}\text{and}\hspace{5mm} I'(k):=\{i_{k+1},\dots, i_l\}.
\eeq
For ordered sets $I$ and $J$ we give $I\amalg J$ the natural ordering
with $i<j$ for each $i\in I, j\in J$.
\end{notation}

In these notations we do {\it not} require that the ordering on $I$ is
such that $i_k<i_{k+1}$ as integers.  We will always let $i_k$ denote
the $k$th element of $I$, as determined by $I$'s given order.  If we
take $\underline{n}=\{1,\dots, n\}$, then the standard $A$-basis for
$A[\partial_1,\dots, \partial_n]=\Hom_{A^e}(K,A)$ can now be written
as the set $\{\partial_I: I\text{ an ordered subset of }\underline{n}\}$.
\par
Via indexing by ordered sets, we can give a clear expression of the
comultiplication on $W$, and the corresponding map $\Delta:K\to K\ox_A
K$.  We have
\beqlbl
\Delta(1\ox o(x_I)\ox 1)=\sum_{I_1,I_2} \pm (1\ox o(x_{I_1})\ox 1)\ox(1\ox o(x_{I_2})\ox 1),
\label{cmltp}
\eeqlbl
where the sum is indexed by all ordered disjoint subsets $I_1,I_2\subset I$
with $I_1\cup I_2=I$, and $\pm$ is the sign of $\sigma$, where
$\sigma$ is the unique permutation with $\{i_{\sigma(1)},\dots
i_{\sigma(|I|)}\}=I_1\amalg I_2$ as an ordered set.

\begin{proposition}
The $\circ_\phi$ operation is given by
\beq
(a\partial_I)\circ_\phi(b\partial_J)=\sum_{1\leq k\leq |I|}(-1)^{(|I|-k)(|J|-1)} a\frac{\partial}{\partial x_{i_k}}(b)\partial_{I(k)\amalg J\amalg I'(k)}
\eeq
and the bracket $[ \ , \ ]_\phi$ is given by
\beq
[a\partial_I,b\partial_J]_\phi=
\eeq
\beq
\sum_{1\leq k\leq |I|}(-1)^{(|I|-k)(|J|-1)} a\frac{\partial}{\partial x_{i_k}}(b)\partial_{I(k)\amalg J\amalg I'(k)}-
\sum_{1\leq l\leq |J|}(-1)^{(l-1)(|I|-1)} b\frac{\partial}{\partial x_{j_l}}(a)\partial_{J(l)\amalg I\amalg J'(l)}.
\eeq
\label{prop300}
\end{proposition}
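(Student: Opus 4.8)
The plan is to compute $(a\partial_I)\circ_\phi (b\partial_J)$ directly from Definition~\ref{phibrak2} using the explicit formula \eqref{bla} for $\phi$, the explicit comultiplication \eqref{cmltp} on $W$, and the dictionary between monomials $\partial_I$ and dual orbit-sum functions. The bracket formula will then follow immediately, since $[\ ,\ ]_\phi$ is by definition the graded $\circ_\phi$-commutator; all the real work is in the circle operation.

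First I would evaluate $(a\partial_I)\circ_\phi(b\partial_J)$ on a generating element $1\ox o(x_L)\ox 1$ in $k\ox W\ox k$. Applying $\Delta^{(2)}_K = (\Delta_K\ox \mathrm{id})\Delta_K$ and \eqref{cmltp}, one writes $\Delta^{(2)}_K(1\ox o(x_L)\ox 1)$ as a signed sum over ordered disjoint decompositions $L = L_1\amalg' L_2\amalg' L_3$ (meaning: indexed by ordered disjoint subsets whose union is $L$, with sign the signature of the sorting permutation). Then $(\mathrm{id}_K\ox_A (b\partial_J)\ox_A \mathrm{id}_K)$ kills every term except those with $L_2 = J$ as an ordered set, replacing the middle tensor factor $1\ox o(x_{L_2})\ox 1$ by the element $b\partial_J(o(x_J))\in A$, up to the constant relating $\partial_J$ to the dual of $o(x_J)$ and the appropriate Koszul sign $(-1)^{|J||L_1|}$ built into $\mathrm{id}\ox g\ox \mathrm{id}$. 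This leaves a signed sum, over decompositions $L = L_1\amalg' J\amalg' L_3$, of elements $1\ox o(x_{L_1})\ox b\ox o(x_{L_3})\ox 1$ in $K\ox_A K$. Here the combinatorial heart is bookkeeping the sign: the signature of the permutation sorting $(L_1, J, L_3)$ back to $L$, combined with the $(-1)^{\sum \ell}$ normalizations in $\partial_I \leftrightarrow o(x_I)$ and the Koszul sign, must collapse to the stated $(-1)^{(|I|-k)(|J|-1)}$ once we reindex.

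Next I would apply $\phi$ using \eqref{bla}: $\phi$ sends $1\ox o(x_{L_1})\ox b\ox o(x_{L_3})\ox 1$ (after moving $b = b(x)$ past — note $b$ is a polynomial, so $1\ox o(x_{L_1})\ox b\ox o(x_{L_3})\ox 1$ is $A^e$-generated by monomials $1\ox o(x_{L_1})\ox x_{j_1}\cdots x_{j_t}\ox o(x_{L_3})\ox 1$ times the $A$-coefficients of $b$) to a signed sum over those middle variables $x_{j_\nu}$ with $\max(L_3) < x_{j_\nu} < \min(L_1)$, producing $x_{j_1}\cdots x_{j_{\nu-1}}\ox o(x_{L_3}, x_{j_\nu}, x_{L_1})\ox x_{j_{\nu+1}}\cdots x_{j_t}$. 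Finally, applying $a\partial_I$: the dual-of-$o(x_I)$ functional is nonzero on $o(x_{L_3}, x_{j_\nu}, x_{L_1})$ only when this orbit sum equals $\pm o(x_I)$, which forces $L_3\amalg'\{x_{j_\nu}\}\amalg' L_1 = I$ as ordered sets, i.e.\ $L_3 = I(k)$, $L_1 = I'(k)$ for some position $k$ with $i_k = j_\nu$; the constraint $\max(L_3) < x_{j_\nu} < \min(L_1)$ then picks out exactly that $x_{j_\nu}$ equals $x_{i_k}$ and it is this $\partial/\partial x_{i_k}$ which extracts the derivative $\partial b/\partial x_{i_k}$ from the sum over monomials of $b$. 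Collecting everything gives $\sum_k (-1)^{(|I|-k)(|J|-1)} a\,\frac{\partial b}{\partial x_{i_k}}\,\partial_{I(k)\amalg J\amalg I'(k)}$ evaluated on $1\ox o(x_L)\ox 1$, where $L = I(k)\amalg J\amalg I'(k)$.

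The main obstacle will be the sign chase: tracking the signature of the sorting permutation through the triple decomposition, the Koszul sign $(-1)^{|J||L_1|}$, the two factors of $(-1)^{\sum_\ell \ell}$ from the $\partial\leftrightarrow o$ identifications on the $I$-side and $J$-side, the $(-1)^{su+u}$ in \eqref{bla}, and the sign in \eqref{cmltp}, and verifying they telescope to $(-1)^{(|I|-k)(|J|-1)}$. This is precisely the kind of ``delicate but straightforward'' bookkeeping the paper flags for Proposition~\ref{proporp}, and I would organize it by first checking the case $a = b = 1$, $|I|,|J|$ small to fix conventions, then doing the general reindexing $I = I(k)\amalg\{i_k\}\amalg I'(k)$ abstractly. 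Once the circle formula is established, the bracket formula is immediate from $[f,g]_\phi = f\circ_\phi g - (-1)^{(|f|-1)(|g|-1)} g\circ_\phi f$ with $|a\partial_I| = |I|$, $|b\partial_J| = |J|$, noting that in the second term the roles of $I$ and $J$ (and $a$ and $b$) swap and the reindexing $|J|-l \mapsto l-1$ absorbs into the exponent to give $(-1)^{(l-1)(|I|-1)}$.
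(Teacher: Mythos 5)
Your proposal follows essentially the same route as the paper's proof: evaluate on generators $1\ox o(x_L)\ox 1$, expand $\Delta^{(2)}_K$ over ordered decompositions, let $b\partial_J$ kill the terms whose middle factor is not $o(x_J)$, use the ordering constraint in the formula for $\phi$ to kill the remaining ``wrong'' outer decompositions, extract $\partial b/\partial x_{i_k}$ via $\partial_I$, and then chase signs (the paper does this with intermediate symbols $\epsilon_1,\dots,\epsilon_5$), with the bracket formula following formally from the commutator definition. One small caution: since $o$ is totally antisymmetric, $\partial_I$ forces $L_3\amalg\{j_\nu\}\amalg L_1=I$ only as \emph{sets} (up to sign), so it is really the constraint $\max(L_3)<x_{j_\nu}<\min(L_1)$ built into $\phi$ that eliminates the non-interleaved decompositions --- exactly the role of the set $\mathscr{S}$ in the paper's argument.
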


Note that if $I$ and $J$ share some indices, many of the terms 
$a\frac{\partial}{\partial x_{i_k}}(b)\partial_{I(k)\amalg J\amalg
  I'(k)}$ may be $0$.

\begin{proof}
Take $f=(a\partial_I)$ and $g=(b\partial_J)$ with $I$ and $J$ ordered
subsets of $\underline{n}$.  We may assume $b$ is an ordered monomial
$b=x_{j_1}\dots x_{j_t}$.  We first provide a computation with symbols
$(-1)^{\epsilon_i}$ in place of significant signs.  We will then go
back and provide the appropriate signs.
\par
Suppose we have a nonzero monomial $1\ox o(x_{I(k)\amalg J\amalg
  I'(k)})\ox 1$.  This implies, in particular, that $J$ and $I(k)\cup
I'(k)$ share no indices.  Then 
\beq
\Delta^{(2)}(1\ox o(x_{I'(k)\amalg J\amalg I(k)})\ox 1)=
\eeq
\beq
\ba{l}
(1\ox o(x_{I'(k)})\ox 1)\ox (1\ox o(x_J)\ox 1)\ox (1\ox o(x_{I(k)})\ox 1)\\
+\sum_{\mathscr{S}} \pm (1\ox o(x_{I_2})\ox 1)\ox (1\ox o(x_J)\ox 1)\ox (1\ox o(x_{I_1})\ox 1)\\
+\sum_{J'\neq J} \pm (1\ox o(x_{I'_2})\ox 1)\ox (1\ox o(x_{J'})\ox 1)\ox (1\ox o(x_{I'_1})\ox 1),
\ea
\eeq
where $\mathscr{S}$ is the set of all pairs of subsets $I_1, I_2\subset
I(k)\amalg I'(k)=I\setminus\{i_k\}$ with $I_1\cup
I_2=I\setminus\{i_k\}$, {\it minus} the pair $\{I(k), I'(k)\}$.  We do
not specify the indexing set of the final sum, except to say that
$J'\neq J$.  Since $b\partial_J(1\ox o(x_{J'})\ox 1)=0$ whenever $J'\neq
J$, the above expression gives
\beq
(1\ox (b\partial_J)\ox 1)\Delta^{(2)}\big(1\ox o(x_{I'(k)\amalg J\amalg I(k)})\ox 1\big)=
\eeq
\beq%lbl
(-1)^{\epsilon_1} 1\ox o(x_{I'(k)})\ox b\ox o(x_{I(k)})\ox 1+\sum_{\mathscr{S}} \pm (1\ox o(x_{I_2})\ox 1)\ox b\ox (1\ox o(x_{I_1})\ox 1) . 
%\label{eq76}
\eeq%lbl
\par
Now, one can conclude from the description of $\mathscr{S}$ that the
maximal element of each $I_1$ is greater than the minimal element of
$I_2$.  So 
\beq
\phi\big(1\ox o(x_{I_1})\ox 1)\ox b\ox (1\ox x_{I_2}\ox 1)\big)=0
\eeq
and
\beq%lbl
\ba{l}
\phi(1\ox (b\partial_J)\ox 1)\Delta^{(2)}(1\ox o(x_{I'(k)\amalg J\amalg I(k)})\ox 1)\\
\hspace{10mm}=(-1)^{\epsilon_1}\phi (1\ox o(x_{I'(k)})\ox b\ox o( x_{I(k)})\ox 1)\\
\hspace{10mm}=(-1)^{\epsilon_1}\phi (1\ox o(x_{I'(k)})\ox x_{j_1}\dots x_{j_t}\ox o(x_{I(k)})\ox 1)\\
\hspace{10mm}=(-1)^{\epsilon_2}\sum_{x_{i_{k-1}}<x_{j_\nu}<x_{i_{k+1}}}x_{j_1}\dots x_{j_{\nu-1}}\ox o(x_{I(k)\amalg\{j_\nu\}\amalg I'(k)})\ox x_{j_{\nu+1}}\dots x_{j_t}.
\ea
%\label{eq77}
\eeq%lbl
Finally, since $\partial_I(1\ox o(x_{I(k)\amalg\{j_\nu\}\amalg
  I'(k)})\ox 1)=0$ whenever $I(k)\amalg\{j_\nu\}\amalg I'(k)\neq I$,
i.e.\  whenever $j_\nu\neq i_k$, we have
\beq%lbl
\ba{l}
\big((a\partial_I)\circ_{\phi}(b\partial_J)\big)(1\ox o(x_{I'(k)\amalg J\amalg I(k)})\ox 1 )\\
\hspace{10mm}=a\partial_I\phi(1\ox (b\partial_J)\ox 1)\Delta^{(2)}(1\ox o(x_{I'(k)\amalg J\amalg I(k)})\ox 1)\\
\hspace{10mm}=(-1)^{\epsilon_3} a\big (|\{\nu: x_{j_\nu}=x_{i_k}\}| \frac{b}{x_{i_k}})\\
\hspace{10mm}=(-1)^{\epsilon_3} a\frac{\partial}{\partial x_{i_k}}(b).
\ea
%\label{eq78}
\eeq%lbl
One can check that $(a\partial_I)\circ_{\phi}(b\partial_J)$ vanishes
on all monomials $1\ox o(x_L)\ox 1$ with $L$ not of the form
$I'(k)\amalg J\amalg I(k)$, after some permutation.  So we have
\beq
(a\partial_I)\circ_{\phi}(b\partial_J)=\sum_k (-1)^{\epsilon_4} a\frac{\partial}{\partial x_{i_k}}(b)\partial_{I'(k)\amalg J\amalg I(k)},
\eeq
and, after switching the position of $I(k)$ and $I'(k)$,
\beq
(a\partial_I)\circ_{\phi}(b\partial_J)=\sum_k (-1)^{\epsilon_5} a\frac{\partial}{\partial x_{i_k}}(b)\partial_{I(k)\amalg J\amalg I'(k)}.
\eeq
As for the signs, we have
\beq
\ba{cc}
\ba{l}
\epsilon_1=(|I|-k)|J|+\sum_{\ell=1}^{|J|-1}\ell\\
\epsilon_2=\epsilon_1+(|I|-k)(k-1)+(k-1)\\
\epsilon_3=\epsilon_2+\sum_{\ell'=1}^{|I|-1}\ell'\\
\ea
\ba{l}
\epsilon_4=\epsilon_3-\sum_{\ell''=1}^{|I|+|J|-2}\ell''\\
\hspace{4mm}=\epsilon_3-(|I|-1)(|J|-1)-\sum_{\ell'=1}^{|I|-1}\ell'-\sum_{\ell=1}^{|J|-1}\ell\\
\epsilon_5=\epsilon_4+(|I|-1)|J|+(|I|-k)(k-1)\\
\hspace{4mm}=(|I|-k)(|J|-1)\mod 2.
\ea
\ea
\eeq
The formula for the bracket now follows by the definition of $[f,g]_\phi$.
\end{proof}

For elements $a\partial_i$ and $b\partial_j$ the above formula gives
\beqlbl
[a\partial_i,b\partial_j]_\phi= a\frac{\partial}{\partial
  x_i}(b)\partial_j-b\frac{\partial}{\partial x_j}(a)\partial_i.
\label{eq5000}
\eeqlbl
One can also verify the identity
\beqlbl
[a\partial_I, b_1\partial_{J_1} b_2\partial_{J_2}]_\phi=[a\partial_I,
b_1\partial_{J_1}]_\phi b_2\partial_{J_2} +(-1)^{(|I|-1)|J_1|}b_1\partial_{J_1} [a\partial_I,b_2\partial_{J_2}]_\phi.
\label{eq5001}
\eeqlbl
So the bracket given in Proposition \ref{prop300} is seen to recover
the Schouten-Nijenhuis bracket, which is generally expressed using the
formula (\ref{eq5000}) for the bracket on degree $1$ cocycles and
extended to all of $A[\partial_1,\dots,\partial_i]=\mathrm{HH}(A)$
using the graded derivation identity (\ref{eq5001}).

\section{Recovering  Gerstenhaber brackets for groups of prime order}
\label{cyclic} 

Assume in this section that the characteristic of the field $k$ is $p>0$. 
Let $G$ be a cyclic group of order $p$, with generator $g$,
and  $A=kG$, the group algebra. 
Let $x:= g-1$ in  $A$, so that $A\cong k[x]/(x^p)$.
The Hochschild cohomology of $A$ is well-known.
See \cite{Cibils-Solotar} for the algebra structure of $\HH(kG)$ in
the more general case that $G$ is abelian.
In particular, in that case, 
$\HH(kG)\cong \coh(G,k)\ot kG$ as algebras, where $\coh(G,k)$
denotes group cohomology. 
See Sanchez-Flores \cite{Sanchez-Flores} for the Gerstenhaber brackets 
when $G$ is cyclic; using our new techniques,  
we will recover her results  in our case (i.e.\ $G$ has order $p$).
While the
minimal resolution that we use here does not satisfy all the hypotheses~\ref{hypotheses}(a)--(c) assumed in
Theorem~\ref{Gbracket}, it does satisfy \ref{hypotheses}(a) and (b) (the weaker
conditions assumed in Subsection~\ref{weaker}). 
We will show that 
our alternative approach
yields the Gerstenhaber bracket for these examples.

We will use the following $A^e$-module resolution of $A$ 
(see, e.g., \cite[Exer.\ 9.1.4]{weibel}):
$$
K: \quad \quad 
   \cdots\stackrel{\cdot v}{\longrightarrow} A^e
  \stackrel{\cdot u}{\longrightarrow} A^e
  \stackrel{\cdot v}{\longrightarrow} A^e
  \stackrel{\cdot u}{\longrightarrow} A^e
  \stackrel{m}{\longrightarrow} A \rightarrow 0,
$$
where $u= x\ot 1 - 1\ot x$, $\ v = x^{p-1}\ot 1 + x^{p-2}\ot x + 
\cdots + 1\ot x^{p-1}$, and 
$m$ denotes multiplication.  
For each $i$, let $\xi_i$ denote the element $1\ot 1$ of $A^e$ in degree $i$. 

The following 
maps $h_n: K_n \rightarrow K_{n+1}$ constitute a contracting homotopy
for $K$, as may be verified by direct calculation. 
\begin{eqnarray*}
 h_{-1}(x^i) & = & \xi_0 x^i , \\
  h_0(x^i\xi_0 x^j) & = & %\xi_1 x^{i+j-1}  + x\xi_1 x^{i+j-2} +
       %\cdots  + x^{i-1}\xi_1 x^{j} , \\ 
    \sum_{l=0}^{i-1} x^l \xi_1 x^{i+j-1-l}, \\
h_1(x^i\xi_1 x^j) & = & \delta_{i, p-1} x^j\xi_2 , \\
h_{2n}(x^i\xi_{2n} x^j) &=& %-x^{i+j-1}\xi_{2n+1} - x^{i+j-2}\xi_{2n+1} x - \cdots
     %- x^i\xi_{2n+1}  x^{j-1} \quad (n\geq 1) , \\
      - \sum_{l=0}^{j-1} x^{i+j-1-l} \xi_{2n+1} x^l \quad (n\geq 1),\\
h_{2n+1} (x^i\xi_{2n+1} x^j) & = & \delta_{j, p-1} x^i\xi_{2n+1} \quad (n\geq 1) .
\end{eqnarray*} 

Applying Lemma~\ref{h-phi}, we may obtain maps $\phi_n: (K\ot_A K)_n \rightarrow K_{n+1}$,
from the maps $h_n$, for which $d(\phi) = F_K$. 
We only need these maps in degrees 0 and 1: 
\begin{eqnarray*}
\phi_0(\xi_0\ot x^i \xi_0) & = & %\xi_1 x^{i-1} + x \xi_1 x^{i-2} + \cdots
               %+ x^{i-1}\xi_1 ,  \\
     \sum_{l=0}^{i-1} x^l \xi_1 x^{i-1-l} , \\
\phi_1(\xi_1\ot x^i\xi_0) & = &  - \delta_{i,p-1} \xi_2 , \\
\phi_1(\xi_0\ot x^i \xi_1) & = & \delta_{i,p-1} \xi_2  . 
%\phi_2(\xi_2\ot x^i\xi_0) & = &  
%            x^{i-1}\xi_3 + x^{i-2}\xi_3 x + \cdots + \xi_3 x^{i-1} . 
\end{eqnarray*}

Next we record a diagonal map $\Delta : K \rightarrow K\ot _A K$.
It may be checked directly that the following map is a chain map: 
\begin{eqnarray*}
   \Delta_0(\xi_0) & = & \xi_0\ot \xi_0 , \\
   \Delta_1(\xi_1) & = & \xi_1\ot \xi_0 + \xi_0\ot \xi_1 , \\
   \Delta_2(\xi_2) & = & \xi_2\ot \xi_0 + \xi_0\ot \xi_2 + 
     \sum_{a+b+c=p-2} x^a \xi_1 \ot x^b \xi_1x^c , \\
    \Delta_3(\xi_3) & = & \xi_3\ot \xi_0 + \xi_2\ot \xi_1 + \xi_1\ot \xi_2
       + \xi_0\ot \xi_3 ,\\
\mbox{and generally for }n\geq 1, &  & \\
\Delta_{2n}(\xi_{2n}) & = & \sum_{i=0}^n \xi_{2i}\ot \xi_{2n-2i} + \sum_{i=0}^{n-1}
   \sum_{a+b+c=p-2} x^a \xi_{2i+1}\ot x^b \xi_{2n-2i-1}x^c,\\
\Delta_{2n+1}(\xi_{2n+1}) & = & \sum_{i=0}^{2n+1} \xi_i\ot \xi_{2n+1-i} .
\end{eqnarray*}
Let 
\begin{equation}\label{Delta-2}
\Delta^{(2)}_K:= ( id\ot \Delta) \Delta
\end{equation}
for the purpose of computing $\phi$-brackets under Definition~\ref{phibrak2}.
(Note that $\Delta$ is coassociative if and only if $p=2$.)

We will compute $\phi$-brackets on cohomology in low degrees.
Applying $\Hom_{A^e}( - ,A)$ to $K$, the differentials all are 0.
In each degree, the cohomology is the free $A$-module $A$. 
%, since
%$\HH (A)\cong \coh(G,k)\ot A$, and the group cohomology $\coh(G,k)$
%is a copy of $k$ in each degree. 
Let $x^j\xi_i^* \in \Hom_{A^e} (A^e, A)$ denote the function that takes $\xi_i$
to $x^j$. 
Cup products are known: 
If $p=2$, then $\xi_1^*$ generates the Hochschild cohomology as an $A$-algebra
(recall $\HH (A)\cong \coh (G,k)\ot A$),
while if $p>2$, it is generated by $\xi_1^*$ and $\xi_2^*$. 
By applying the identities (\ref{derivation}), 
we need only compute brackets of pairs of elements of degrees 1 and 2.
The $\phi$-circle product of $x^i\xi_1^*$ and $x^j\xi_1^*$ 
in degree 1 is given by 
\begin{eqnarray*}
 ( x^i \xi_1^*  \circ_{\phi}  x^j\xi_1^*) (\xi_1) 
   % & = &  x^i \xi_1^* ( \phi_0 ( x^j \xi_1^*(\xi_1))) \\
    &= & x^i \xi_1^*(\phi_0 ( x^j\xi_0\ot \xi_0 + \xi_0\ot x^j \xi_0
      + \xi_0\ot \xi_0x^j))\\
   & = & x^i \xi_1^*( \xi_1 x^{j-1} + x \xi_1 x^{j-2} + \cdots
    + x^{j-1} \xi_1) \\
   & = & j x^{i+j-1} .
\end{eqnarray*}
Therefore, by symmetry, we obtain
$$
   [x^i\xi_1^* , x^j \xi_1^*]_{\phi} = (j-i) x^{i+j-1} \xi_1^* .
$$
The $\phi$-circle product of elements in degrees 1 and 2 is given similarly by 
$$
 ( x^i \xi_1^* \circ_{\phi} x^j \xi_2^* )(\xi_2) %& = & 
  % x^i \xi_1^*(\phi_0 ( \xi_0\ot x^j \xi_0))\\
   % & = & x^i \xi_1^*( \xi_1x^{j-1} + x\xi_1 x^{j-2} + \cdots + x^{j-1}\xi_1) \\
     = j x^{i+j-1} , 
$$
while in the reverse order we have
\begin{eqnarray*}
 ( x^j \xi_2^* \circ_{\phi} x^i \xi_1^* ) ( \xi_2)   %& = & \! 
%   x^j \xi_2^*\big( \sum_{a+b+c = p-2} \big(\phi_1(  x^i \xi_1^* ( x^a \xi_1 x^b ) \ot \xi_1 x^c )
 %   + \phi_1 ( x^a \xi_1 x^b \ot x^i \xi_1^*(\xi_1 x^c))\big)\big)\\
 \!  & = & \! x^j\xi_2^*\big(\sum_{a+b+c = p-2}  \phi_1( -x^a \xi_1 x^{b+i}\ot \xi_0 x^c)
    + \phi_1( \xi_0\ot x^{a+b+i} \ot \xi_1 x^c)\big) \\
 \!  & = & \! \sum_{a+c = i-1} x^{i+j-1} - \sum_{a+b = p-1-i} x^{i+j-1} \\
    & = & (i+p - i) x^{i+j-1} 
         \ \ = \ \ 0 .
\end{eqnarray*}
So $[x^i\xi_1^*, x^j \xi_2^*]_{\phi} = jx^{i+j-1} \xi_2^*$.
Finally, the $\phi$-circle product of two such elements of degree 2 is  
\begin{eqnarray*}
 (x^i \xi_2^* \circ_{\phi} x^j\xi_2^*)(\xi_3) %& = & 
  %  x^i\xi_2^* ( \phi_1( x^j \xi_2^*(\xi_2) \ot\xi_1)
   %  + \phi_1(\xi_1\ot x^j \xi_2^*(\xi_2)))\\
   & = & x^i \xi_2^* ( \phi_1(\xi_0\ot x^j \xi_1) + \phi_1 ( \xi_1\ot x^j \xi_0))\\
   &=& x^i \xi_2^*( \delta_{j,p-1} \xi_2 - \delta_{j,p-1} \xi_2) \\
   & = & 0 . 
\end{eqnarray*}
So $[x^i\xi_2^* , x^j \xi_2^* ]_{\phi} = 0$. 

These computations agree with the Gerstenhaber bracket as computed by 
Sanchez-Flores~\cite{Sanchez-Flores},
as well as with direct computations of the Gerstenhaber bracket 
using standard chain maps $\iota$, $\pi$.
Next we will verify the conditions of Lemma~\ref{lemma-weaker} to explain
why these $\phi$-brackets  agree with Gerstenhaber brackets.

Let $\iota: K\rightarrow B$ and $\pi: B\rightarrow K$ be defined
as follows. (See \cite{BACHG} for a more general setting and
\cite[Section 3]{Burciu-Witherspoon} for the maps as below in this specific case.)
The chain map $\iota$ is given by
$$
  \iota_{2l}(\xi_{2l}) = 1\ot \alpha_l \ \ \ \mbox{ and }
  \ \ \ \iota_{2l+1}(\xi_{2l+1}) = 1\ot x\ot \alpha_l 
$$
where $\alpha_0=1$ and if 
$l\geq 1$, 
$$
  \alpha_l = \sum_{ \{ i_1+i_2+\cdots + i_{l+1} = lp-l 
  \mid i_1, i_2, \ldots , i_l \geq 1 \} }
   x^{i_1} \ot x \ot x^{i_2} \ot x \ot\cdots\ot x\ot x^{i_{l+1}} . 
$$
(Note that in the above sum, $i_{l+1}$ can take on the value 0 while
each of $i_1,\ldots,i_l$ must be at least 1.) 
The chain map $\pi$ is given by 
\begin{eqnarray*}
  \pi_{2l} ( 1\!\ot\! x^{i_1}\!\ot\! x^{i_2}\!\ot\! \cdots\!\ot\! x^{i_{2l}}\!\ot\! 1)\!        &=&\!
    \xi_{2l} x^{i_1+i_2-p} x^{i_3+i_4 -p}\cdots x^{i_{2l-1}+i_{2l} -p} , \\
  \pi_{2l+1}(1\!\ot\! x^{i_1}\!\ot\! x^{i_2}\!\ot\! \cdots\!\ot\! x^{i_{2l+1}}\!\ot\!
    1)\! &=&\!
   \sum_{m=0}^{i_1-1} x^m \xi_{2l+1} x^{i_1-m-1} x^{i_2+i_3 - p} x^{i_4+i_5-p}\cdots
   x^{i_{2l}+ i_{2l+1}-p} .
\end{eqnarray*} 
(In the above expressions, any term involving a negative exponent of $x$
should be interpreted as 0.)
Using these maps, we may check directly that 
the map $\Delta^{(2)}_K $ defined by (\ref{Delta-2}) 
satisfies $\Delta^{(2)}_K = (\pi\ot\pi\ot\pi) \Delta^{(2)}_B \iota$. 
Consequently, Lemma~\ref{lemma-weaker} implies that
$\phi$-brackets, defined as above,  coincide with Gerstenhaber brackets,
as we have observed.

\bibliographystyle{plain}

\def\cprime{$'$}

\end{document}